\begin{document}

\topmargin-0.1in
\textheight8.5in
\textwidth5.5in
 
\footskip35pt
\oddsidemargin.5in
\evensidemargin.5in

\newcommand{\V}{{\cal V}}      
\renewcommand{\O}{{\cal O}}
\newcommand{\Ext}{\hbox{Ext}}

\newcommand{\lonto}{{\protect \longrightarrow\!\!\!\!\!\!\!\!\longrightarrow}}

\newcommand{\m}{{\frak m}}
\newcommand{\gl}{{\frak g}{\frak l}}
\newcommand{\ssl}{{\frak s}{\frak l}}

\renewcommand{\d}{\partial}

\newcommand{\ds}{\displaystyle}
\newcommand{\s}{\sigma}
\renewcommand{\l}{\lambda}
\renewcommand{\a}{\alpha}
\renewcommand{\b}{\beta}
\newcommand{\G}{\Gamma}
\newcommand{\g}{\gamma}

\newcommand{\C}{{\Bbb C}}
\newcommand{\F}{{\Bbb F}}
\newcommand{\N}{{\Bbb N}}
\newcommand{\Z}{{\Bbb Z}}
\newcommand{\ZZ}{{\Bbb Z}}
\newcommand{\RR}{{\Bbb R}}
\newcommand{\PP}{{\Bbb P}}
\newcommand{\K}{{\cal K}}
\newcommand{\ve}{{\varepsilon}}
\newcommand{\cupp}{{\star}}

\newcommand{\rowxy}{(x\ y)}
\newcommand{\colxy}{ \left({\begin{array}{c} x \\ y \end{array}}\right)}
\newcommand{\scolxy}{\left({\begin{smallmatrix} x \\ y
\end{smallmatrix}}\right)}

\renewcommand{\P}{{\Bbb P}}

\newcommand{\la}{\langle}
\newcommand{\ra}{\rangle}

\newtheorem{thm}{Theorem}[section]
\newtheorem{lemma}[thm]{Lemma}
\newtheorem{cor}[thm]{Corollary}
\newtheorem{prop}[thm]{Proposition}

\theoremstyle{definition}
\newtheorem{defn}[thm]{Definition}
\newtheorem{notn}[thm]{Notation}
\newtheorem{ex}[thm]{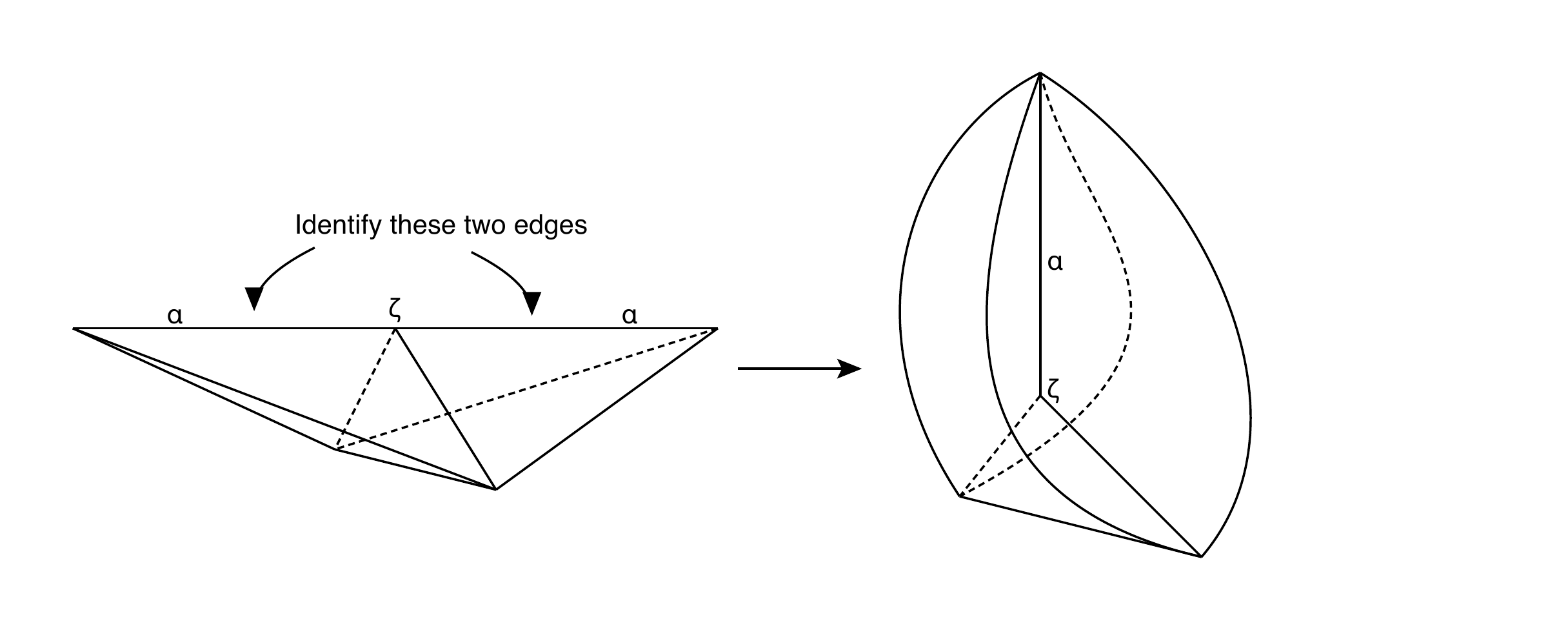}
\newtheorem{rmk}[thm]{Remark}
\newtheorem{rmks}[thm]{Remarks}
\newtheorem{note}[thm]{Note}
\newtheorem{example}[thm]{Example}
\newtheorem{problem}[thm]{Problem}
\newtheorem{ques}[thm]{Question}
 
\numberwithin{equation}{section}

\newcommand{\onto}{{\protect \rightarrow\!\!\!\!\!\rightarrow}}
\newcommand{\donto}{\put(0,-2){$|$}\put(-1.3,-12){$\downarrow$}{\put(-1.3,-14.5) 

{$\downarrow$}}}

\newcounter{letter}
\renewcommand{\theletter}{\rom{(}\alph{letter}\rom{)}}

\newenvironment{lcase}{\begin{list}{~~~~\theletter} {\usecounter{letter}
\setlength{\labelwidth4ex}{\leftmargin6ex}}}{\end{list}}

\newcounter{rnum}
\renewcommand{\thernum}{\rom{(}\roman{rnum}\rom{)}}

\newenvironment{lnum}{\begin{list}{~~~~\thernum}{\usecounter{rnum}
\setlength{\labelwidth4ex}{\leftmargin6ex}}}{\end{list}}

 
\title{Noncommutative Koszul Algebras from Combinatorial Topology}

\keywords{graded algebra, Koszul algebra, Yoneda algebra, regular CW complex}

\author[Cassidy, Phan and Shelton]{ }
\maketitle

\begin{center}

\vskip-.2in Thomas Cassidy$^\dagger$, Christopher Phan$^\ddagger$ and Brad Shelton$^\ddagger$\\
\bigskip

$^\dagger$Department of Mathematics\\ Bucknell University\\
Lewisburg, Pennsylvania  17837
\\ \ \\

   $^\ddagger$Department of Mathematics\\ University of Oregon\\
Eugene, Oregon  97403-1222
\end{center}

\setcounter{page}{1}

\thispagestyle{empty}

 \vspace{0.2in}

\begin{abstract}
\baselineskip15pt
Associated to any uniform finite layered graph $\Gamma$ there is a  noncommutative
  graded quadratic algebra $A(\Gamma)$ given by a construction due to Gelfand, Retakh, Serconek and Wilson.   It is natural to ask when these algebras are Koszul.  Unfortunately,  a mistake in the literature states that all such algebras are Koszul.  That is not the case and the theorem was recently retracted.   
We analyze the Koszul property of these algebras for two large classes of graphs associated to finite regular CW complexes, $X$. Our methods are primarily topological. We solve the Koszul problem by  introducing new cohomology groups $H_X(n,k)$, generalizing the usual cohomology groups $H^n(X)$.  Along with several other results, our methods give a new and primarily topological proof of the main result of \cite{SW} and \cite{Piontkovski1}.  
\end{abstract}

\bigskip

\baselineskip18pt


\section{Introduction}

In 2001, I. Gelfand, V. Retakh and R.L. Wilson  \cite{GRW}, introduced an interesting family of algebras, $Q_n$, associated to the problem of factoring noncommuting polynomials of degree $n$.  The algebra $Q_n$ is defined using the combinatorics of the poset of all subsets of $\{1\ldots,n\}$, i.e. 
the simplicial complex of a single $n$-simplex. In 2005  these same authors joined by S. Serconek,  \cite{GRSW}, generalized the definition of $Q_n$ so as to obtain a graded algebra $A(\Gamma)$ associated to any layered graph $\Gamma$.   

The algebra $A(\Gamma)$ is defined as follows.  Let $W$ be the vector space spanned by the edges of $\Gamma$ and let $t$ be a central indeterminate over the free algebra $T(W)$.  Then $A(\Gamma)$ is the graded 
algebra obtained by factoring from $T(W)$ the relations obtained by equating coefficients of  $t$
in the expression
$$(t-e_1)(t-e_2)\cdots(t-e_n) = (t-f_1)(t-f_2)\cdots(t-f_n)$$
whenever $e_1,\ldots, e_n$ and $f_1,\dots,f_n$ define directed paths in $\Gamma$ with the same head and tail.  In the recent paper \cite{RSW3}  the algebras $A(\Gamma)$ are given the name
{splitting algebras}.

Many interesting results have been proved about the algebras $A(\Gamma)$.  
It was shown in \cite{GRW} that $Q_n$ is quadratic. In \cite{GGRSW} the Hilbert series of $Q_n$ and its quadratic dual were calculated and observed to satisfy what is known as numerical Koszulity.  Shortly thereafter two papers, \cite{SW} and \cite{Piontkovski1}, gave independent proofs that the algebra $Q_n$ is Koszul.  These proofs are both very combinatorial in nature, the first utilizing the notion of distributive lattices and the second using Groebner basis theory.
In \cite{GRSW} a basis was found for the more general splitting algebras $A(\Gamma)$ and then, in \cite{RSW1}, a beautiful closed formula was given for the Hilbert series of 
$A(\Gamma)$.  

This paper is mainly concerned with the question of when the algebras $A(\Gamma)$ are Koszul.  In \cite{RSW2} it was shown that $A(\Gamma)$ is a quadratic algebra whenever $\Gamma$ satisfies a simple combinatorial property called {\it uniform}.   
Unfortunately, it was also claimed in \cite{RSW2} that $A(\Gamma)$ is Koszul whenever
$\Gamma$ is uniform.  There exist many counterexamples to this claim.  For an example see \cite{RSW3}, which also contains interesting results on the Koszulity of $A(\Gamma)$.

The algebra $A(\Gamma)$ is naturally filtered using a rank function associated to the rank function on $\Gamma$.  The associated graded algebra, $grA(\Gamma)$, has substantially simpler relations than $A(\Gamma)$.  Under the uniformity hypothesis, $grA(\Gamma)$ and $A(\Gamma)$ are quadratic, from which it  follows that one may prove $A(\Gamma)$ is Koszul by instead proving that $grA(\Gamma)$ is Koszul.  

In this paper we work exclusively with the quadratic dual algebra $R(\Gamma) = grA(\Gamma)^!$, as defined in section 3.  Since $R(\Gamma)$ is Koszul if and only if 
$grA(\Gamma)$ is Koszul, any results about the Koszulity of $R(\Gamma)$ give us information about the Koszulity of $A(\Gamma)$.  

The primary goal of this paper is to concentrate on the topological case:   those cases where the layered graph $\Gamma$ is associated to a topological space $X$.  In particular we wish to have a more geometric or topological understanding of why the algebras $Q_n$ are Koszul.   Given a regular CW complex $X$, the face poset $P(X)$ of $X$ becomes a layered graph if we simply attach a new minimal element $\bar 0$ corresponding to the empty set.  We refer to this graph as $\bar P(X)$. 

\begin{thm}\label{main1}(See Theorem \ref{easy})
Let $F$ be any field and $X$ a regular CW complex with face poset $P(X)$.  Then
$R(\bar P(X))$ and $A(\bar P(X))$ are Koszul algebras.
\end{thm}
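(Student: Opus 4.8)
The plan is to prove the statement for $R(\bar P(X))$ and then transport it to $A(\bar P(X))$ via the reductions recalled in the introduction. Since $X$ is a regular CW complex, $\bar P(X)$ is a uniform layered graph, so by \cite{RSW2} both $A(\bar P(X))$ and $grA(\bar P(X))$ are quadratic; as $grA(\bar P(X))$ Koszul implies $A(\bar P(X))$ Koszul, and $grA(\bar P(X))$ is Koszul precisely when its quadratic dual $R(\bar P(X))$ is, the theorem reduces to showing that $R(\bar P(X))$ is Koszul, i.e. that $\Ext^n_{R(\bar P(X))}(F,F)$ is concentrated in internal degree $n$ for every $n$.

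The heart of the argument is to express this $\Ext$-vanishing in terms of the combinatorial topology of the face poset. Using the presentation of $R(\Gamma)=grA(\Gamma)^!$ from section 3 (the generators of $grA(\Gamma)$ indexed by the positive-rank vertices of $\Gamma$, the quadratic relations supported on the rank-two intervals), one analyzes the minimal graded free resolution of $F$ over $R(\bar P(X))$ and isolates the obstruction to Koszulity; this is exactly what the cohomology groups $H_X(n,k)$ are built to measure, with $H_X(n,n)$ recovering the ordinary cohomology $H^n(X)$ and Koszulity of $R(\bar P(X))$ equivalent to $H_X(n,k)=0$ for all $n\ne k$. I would arrange matters so that this obstruction \emph{localizes}: it vanishes as soon as, for every interval $[u,v]$ of $\bar P(X)$ (with $u=\bar 0$ allowed), the reduced simplicial cohomology $\widetilde H^{\,m}(\Delta(u,v);F)$ of the order complex of the open interval $(u,v)$ vanishes for all $m\ne\rho(v)-\rho(u)-2$, where $\rho$ is the rank function --- a Cohen--Macaulay-type condition on $\bar P(X)$. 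Making this dictionary precise --- proving that the groups $H_X(n,k)$ genuinely capture the off-diagonal $\Ext$ and that their vanishing follows from the interval condition --- is the step I expect to be the real obstacle; everything after it is classical.

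Indeed, the final step is a known fact of combinatorial topology: for a regular CW complex $X$ the poset $\bar P(X)$ is a CW poset in the sense of Bj\"orner, so $\Delta(u,v)$ is homeomorphic to the sphere $S^{\rho(v)-\rho(u)-2}$ for every $u<v$. For $u=\bar 0$ this is the barycentric subdivision of the boundary $\partial\bar v\cong S^{\dim v-1}$ of the closed cell $\bar v$; for $u\ne\bar 0$ it is the link of $u$ in that boundary sphere, again a sphere of the predicted dimension because $\bar v$ is itself a regular CW ball. Hence $\widetilde H^{\,m}(\Delta(u,v);F)$ is $F$ for $m=\rho(v)-\rho(u)-2$ and $0$ otherwise, the interval condition holds, all groups $H_X(n,k)$ with $n\ne k$ vanish, and $R(\bar P(X))$ --- hence $grA(\bar P(X))$ and $A(\bar P(X))$ --- is Koszul.

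As a check, the $(n-1)$-simplex is a regular CW complex whose face poset together with $\bar 0$ is the Boolean lattice $B_n$, and $A(\bar P(\Delta^{n-1}))=Q_n$; the argument thus yields a primarily topological proof that $Q_n$ is Koszul, recovering \cite{SW} and \cite{Piontkovski1}. What makes this case \emph{easy} is precisely that the boundary of every cell of a regular CW complex is a sphere, so the local obstructions automatically vanish; for the other families of layered graphs treated in the paper the relevant order complexes need not be spheres, and the groups $H_X(n,k)$ then carry genuine information.
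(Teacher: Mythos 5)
Your outline has the right flavor---reduce to $R(\bar P(X))$, translate Koszulity into the vanishing of topological invariants attached to the face poset, and exploit the fact that boundaries of cells in a regular CW complex are spheres---but the bridge you would need between the algebra and the topology is both unestablished and, in the form you state it, false. You assert that Koszulity of $R(\bar P(X))$ is \emph{equivalent} to $H_X(n,k)=0$ for all $n\ne k$. Take $X=S^d$ with any regular cell structure, $d\ge 1$: by Lemma \ref{topology1} (note the indexing: it is $H_X(n,0)$, not $H_X(n,n)$, that recovers $H^n(X)$) one has $H_X(d,0;F)=H^d(S^d;F)=F\ne 0$ with $d\ne 0$, yet the very theorem you are proving asserts that $R(\bar P(S^d))$ is Koszul. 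The honest relationship is more delicate: the complexes $L_X(\cdot,k)$ turn out to be isomorphic to algebraically defined complexes $R_\Gamma(\cdot,k)$ inside $R(\Gamma)$, but the cohomological criterion for Koszulity (Theorem \ref{R-cohomology}) applies only to posets with a unique maximal element. For a general $X$ one must first invoke Corollary \ref{cyclic} to reduce to the intervals $\Gamma_\beta=\bar P(X_\beta)$, where $X_\beta$ is a single closed cell; it is only for these contractible manifolds-with-boundary that the off-diagonal groups $H_{X_\beta}(n,k)$ all vanish (Lemma \ref{topology2} and Remark \ref{disc}) and that this vanishing actually governs Koszulity. A global equivalence of the kind you posit for $\bar P(X)$ does not hold; the correct statement of that type in the paper is Theorem \ref{hard}, which concerns $\hat P(X)$ and only the range $0\le k<n<d$.

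The larger issue is that you explicitly defer ``making this dictionary precise'' as ``the step I expect to be the real obstacle''; that step is essentially the entire content of the proof. Concretely it consists of: the annihilator criterion for Koszulity in terms of $rann_R(r_x(n))$ (Theorem \ref{formula}) and its reformulation as a cohomology computation (Theorem \ref{R-cohomology}); the construction of the comparison map $\Phi_X:L_X(n,k;F)\to R_\Gamma(n,k)$, whose well-definedness rests on a signed path-independence statement (Lemma \ref{pathindep}) that in turn needs the diamond-connectivity of intervals (Lemma \ref{diamond}); and the proof that $\Phi_X$ is an isomorphism by induction on $\dim X$ via a long exact sequence, using Lemma \ref{easycohom} to control the two top cohomology groups on the algebra side. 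None of this is supplied or sketched. Two smaller points: your appeal to Bj\"orner's CW-poset theory claims $\Delta(u,v)\cong S^{\rho(v)-\rho(u)-2}$ for \emph{every} interval, but that theory only guarantees this for $u=\bar 0$; for $u\ne\bar 0$ the order complex of $(u,v)$ need not be a sphere in a non-PL complex. What the argument actually needs, and what is true, is that $H^n(X_\beta,Y_\alpha;F)=0$ for $n<\dim\beta$, which follows from a local-homology computation inside the ball $X_\beta$ rather than from a homeomorphism statement. Your reduction from $A(\bar P(X))$ to $R(\bar P(X))$ and your closing sanity check on $Q_n$ are both fine.
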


A special case of this theorem is proved in \cite{RSW3}, where it is shown that
$A(\bar P(X))$ is Koszul whenever $X$ is a finite simplicial complex.  That result also follows from a purely algebraic result in this paper, Corollary \ref{cyclic}, given that the algebras
$Q_n$ and $grQ_n$ are Koszul.  We wish to stress that the techniques used to prove
Theorem \ref{main1} are completely new and based mostly on algebraic topology. In particular we obtain a new and independent proof that
$Q_n$ is Koszul. 

If the regular CW complex $X$ of dimension $d$ is pure and connected by $(d-1)$-faces (see Section 2 for definitions), then there is another important uniform layered graph associated to $X$.  Let $\hat P(X)$ be the face poset of $X$ extended by both a new minimal element, $\bar 0$ and a new maximal element $\bar 1$.  One can
think of $\bar 1$ as being the space of $X$.  The analysis of the Koszul property for the algebras $A(\hat P(X))$ and $R(\hat P(X))$ is substantially more subtle than that needed for Theorem \ref{main1}.

\begin{thm}\label{main2} 
Let $X$ be a pure and regular CW-complex of dimension $d$ that is connected by 
$(d-1)$-faces. Let $F$ be  a field. 

(1)  If $R(\hat P(X))$ is Koszul over the field $F$, then $H^n(X;F) = 0$ for all $0<n<d$.

(2)  If $X$ is 2-dimensional and $H^1(X;F) = 0$, then $A(\hat P(X))$ and $R(\hat P(X))$ are Koszul algebras over $F$. 

(3)  If $X$ is a manifold or a manifold with boundary and $H^n(X;F)=0$ for $0<k<n$, then
then $A(\hat P(X))$ and $R(\hat P(X))$ are Koszul over $F$.

\end{thm}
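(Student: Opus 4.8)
\emph{Proof strategy.} The plan is to reduce all three assertions to the vanishing of the off-diagonal groups $H_X(n,k)$ and then to compute those groups by topology. Recall from the theory above that $R(\hat P(X))$ is Koszul over $F$ if and only if $H_X(n,k)=0$ for all $n\ne k$, and that once this holds $A(\hat P(X))$ is Koszul as well, since $\hat P(X)$ is uniform and Koszulity then passes from $R(\hat P(X))$ to $grA(\hat P(X))$ to $A(\hat P(X))$. The decisive structural input is that, through the rank filtration of $\hat P(X)$, the groups $H_X(n,k)$ are assembled out of the reduced cohomologies over $F$ of the order complexes of the open intervals of $\hat P(X)$. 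Because $X$ is a regular CW complex, every interval of $\hat P(X)$ that avoids the new top vertex $\bar 1$ is the face poset of a sphere, hence Cohen--Macaulay over every field, and so contributes nothing off the diagonal; by Theorem \ref{easy} the subposet $\bar P(X)$ contributes nothing either. Thus the only possible sources of off-diagonal classes are the intervals with top vertex $\bar 1$: the interval $(\bar 0,\bar 1)$, whose order complex is the barycentric subdivision of $X$, and the intervals $(\sigma,\bar 1)$, whose order complexes are the barycentric subdivisions of the links $\mathrm{lk}_X(\sigma)$.

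For part (1) I would use only the contribution of $(\bar 0,\bar 1)$. For the top value of $k$ the groups $H_X(n,k)$ recover the reduced cohomology of $X$, with the diagonal term corresponding to $\tilde H^d(X;F)$; hence if $R(\hat P(X))$ is Koszul then $\tilde H^m(X;F)=0$ for every $m\ne d$, which, since $X$ is connected and $d$-dimensional, is exactly the assertion $H^n(X;F)=0$ for $0<n<d$.

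For parts (2) and (3) I would prove that under the hypotheses \emph{all} of the off-diagonal groups $H_X(n,k)$ vanish. Building a long exact Mayer--Vietoris type sequence that compares $\hat P(X)$ with the subposets obtained by removing one cell of $X$ at a time, one finds that a class in $\tilde H^j(\mathrm{lk}_X\sigma;F)$ contributes off the diagonal exactly when $0\le j<\dim\mathrm{lk}_X\sigma$, i.e. exactly when $\mathrm{lk}_X\sigma$ fails to be $F$-acyclic below its top dimension, and that a class in $\tilde H^m(X;F)$ contributes off the diagonal exactly when $0<m<d$. So it suffices to check: (a) $\tilde H^n(X;F)=0$ for $0<n<d$, and (b) every link $\mathrm{lk}_X\sigma$ is $F$-acyclic below its top dimension. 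In case (3), (a) is the hypothesis, and (b) is a local fact: for a topological manifold, with or without boundary, the local homology $H_*(X,X\setminus x;F)$ is that of a ball or a half-ball, and via the standard identification of local homology with link cohomology this forces the order complex of each $(\sigma,\bar 1)$ to be an $F$-homology sphere when $\sigma$ is an interior cell and $F$-acyclic when $\sigma$ is a boundary cell --- in either case there is nothing below the top dimension. In case (2) we have $d=2$, so (a) is precisely the hypothesis $H^1(X;F)=0$; for (b), the only links that are not automatically $F$-acyclic below the top dimension are the vertex links, which are graphs, and one checks, using purity, connectedness by $1$-faces, and $H^1(X;F)=0$, that every vertex link is connected.

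I expect the main obstacle to be two-fold. First, one must set up the long exact sequence precisely enough to isolate the contribution of each link and verify that the ``diagonal'' sphere/ball data and the single global class $\tilde H^d(X;F)$ never recombine into spurious off-diagonal cohomology; this is where a careful induction over the poset, or a spectral sequence of the rank filtration, is needed. Second, the manifold input in (3) --- that in a regular CW structure on a topological manifold every cell link is an $F$-homology sphere or an $F$-acyclic complex --- is transparent for PL structures but in general requires a genuine local-homology argument. The connectivity-of-vertex-links step in (2) should by comparison be routine: a disconnected vertex link in a pure $2$-complex that is connected by $1$-faces must produce a nonzero class in $H^1(X;F)$, which a direct Mayer--Vietoris argument on $X$ makes precise.
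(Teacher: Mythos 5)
Your proposal is correct in substance and follows the same overall strategy as the paper: reduce each part to the vanishing of the off--diagonal groups $H_X(n,k)$ (Theorem \ref{hard}) and then compute those groups topologically, with Koszulity of $A(\hat P(X))$ following from that of $R(\hat P(X))$ via $grA$. Where you differ is in how the topological computation is organized. The paper never passes through order complexes of intervals or links: it observes that the columns $C_X(\cdot,k)_\alpha$ of its bicomplex compute the relative groups $H^n(X,Y_\alpha)$, and then climbs the short exact sequences of Lemma \ref{shortex} by induction on $k$ (Lemma \ref{topology2} and Corollary \ref{topology3}). Your condition that each link be $F$-acyclic below its top dimension is equivalent to the paper's condition $H^n(X,Y_\alpha;F)=0$ for $n<d$, via the deformation retraction of $X\setminus e_\alpha$ onto $Y_\alpha$ and the identification of $H^n(X,X\setminus e_\alpha)$ with link cohomology; so the two accountings agree, and the ``main obstacle'' you identify --- the spectral sequence isolating the contribution of each interval and verifying that nothing spurious recombines --- is exactly the work that Section 4 of the paper does explicitly. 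Your manifold input for part (3) is likewise the content of Lemma \ref{topology2}(1), which the paper proves by precisely the local-homology argument you anticipate needing.

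Two cautions. First, watch the range of the vanishing criterion: the correct statement is $H_X(n,k;F)=0$ for $0\le k<n<d$ \emph{only}; the groups $H_X(d,k)$, in particular $H_X(d,0)=H^d(X;F)$, may be nonzero for a Koszul algebra (this is the point of the remark following Theorem \ref{hard}), so the criterion ``$H_X(n,k)=0$ for all $n\ne k$'' that you recall at the outset would be false for a closed manifold and would sink parts (2) and (3). Your later bookkeeping, which excludes $\tilde H^d(X)$ and the top cohomology of each link, quietly corrects this, but it is not the criterion you stated. Second, your part (2) does unnecessary work: for $d=2$ the range $0\le k<n<2$ contains only $(n,k)=(1,0)$, and $H_X(1,0;F)=H^1(X;F)$, so Theorem \ref{hard} gives part (2) immediately with no discussion of vertex links. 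Your claim that purity, connectivity by $1$-faces and $H^1(X;F)=0$ force every vertex link to be connected is in fact true (purity and connectivity by $1$-faces give that $X\setminus\{v\}$ is connected, and Mayer--Vietoris for $X=\mathrm{star}(v)\cup(X\setminus\{v\})$ then injects $\tilde H^0$ of the link into $H^1(X;F)$), but it is a consequence of the theorem rather than a needed input to it.
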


All the parts of Theorem \ref{main2} are  corollaries of Theorem \ref{hard}, which
gives necessary and sufficient {\it topological} conditions for the Koszulity of the algebra
$R(\hat P(X))$.  These conditions are expressed in terms of a collection of cohomology groups, $H_X(n,k)$ that are defined and studied in Section 4.  We show that 
$H_X(n,0) = H^n(X)$, but the other groups do not seem to be standard. These groups may turn out to be of independent interest.

Theorem \ref{main2} should be compared to the results in \cite{RSW3}.  In that paper it is shown that
if $A(\hat P(X))$ is Koszul (or numerically Koszul) then there is some restriction on the Euler characteristic of $X$.  Part (2) of Theorem \ref{main2} is also obtained there, but only for oriented 2-manifolds.
 There is no analogue in \cite{RSW3}   of Theorem \ref{hard} or of  Theorem \ref{main2} part (3).

It is interesting to observe from Theorem \ref{main2} that the Koszul property of the algebra
$R(\hat P(X))$ is dependent on the field, unlike the algebras $R(\bar P(X))$.    For example, if $X$ is a regular cellular representation of $\RR\PP^2$, then $R(\hat P(X))$ is Koszul if and only if the field of definition does not have characteristic 2. 

It is natural to ask whether the converse to part (1) of Theorem \ref{main2} holds.  We show by example in Theorem \ref{theexample}, using the topological tools of Theorem \ref{hard},  that the converse is not true.  

In addition to the topological results listed above, we prove some strong algebraic results about the Koszul property of any $R(\Gamma)$.  These results appear in Section 3.  The main result of that section is a necessary and sufficient condition for the Koszulity of 
$R(\Gamma)$ given in terms of explicit formulas for the annihilators of certain elements of $R(\Gamma)$.  These formulas then lead to the introduction of a series of cochain complexes $R_\Gamma(n,k)$ which are shown, in Section 5, to be closely related to the cohomology groups $H_X(n,k)$. One easy but important corollary to the results of Section 3 should be mentioned.  For any vertex $x$ in the layered graph $\Gamma$, let $\Gamma_x$ be the layered subgraph of all vertices $y$ below $x$ (including $x$). 

\begin{thm}\label{algebra1}(See Corollary \ref{cyclic})  Let $\Gamma$ be a  uniform layered graph  Then 
$R(\Gamma)$ is Koszul if and only if $R(\Gamma_x)$ is Koszul for every vertex $x$. 

\end{thm}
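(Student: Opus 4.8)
The plan is to reduce the statement to the Koszulity criterion established in Section~3. That criterion is \emph{local at the vertices of $\Gamma$}: it asserts that $R(\Gamma)$ is Koszul if and only if, for every vertex $x$ of $\Gamma$, a certain homological condition $C_\Gamma(x)$ holds — in the language promised above, that a specific annihilator $\operatorname{ann}_{R(\Gamma)}(\theta_x)$ coincides with its expected value, equivalently that the cochain complexes $R_\Gamma(n,k)$ attached to $x$ are exact for $k>0$, equivalently that the cyclic $R(\Gamma)$-module associated to $x$ has a linear resolution. Granting this (it is exactly the content of Corollary~\ref{cyclic} and its precursor, the main theorem of Section~3), the present statement becomes a matter of checking that $C_\Gamma(x)$ is insensitive to the part of $\Gamma$ lying outside $\Gamma_x$.

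First I would record the elementary fact that $\Gamma_x$ is again a uniform layered graph: the uniformity condition only constrains, for each vertex, the edges and vertices weakly below it, and these coincide in $\Gamma$ and in $\Gamma_x$. Hence $R(\Gamma_x)=grA(\Gamma_x)^!$ is defined. Next, since every directed path in $\Gamma$ beginning at a vertex $v\le x$ stays weakly below $v$, hence inside $\Gamma_x$, the defining relations of $grA(\Gamma)$ and of $grA(\Gamma_x)$ that involve only edges of $\Gamma_x$ agree; dually, the inclusion $\Gamma_x\hookrightarrow\Gamma$ induces a natural morphism relating $R(\Gamma)$ and $R(\Gamma_x)$ which carries the data attached to a vertex $y\le x$ (the element $\theta_y$, its annihilator, the complexes $R_\bullet(n,k)$ at $y$) in $\Gamma$ to the corresponding data in $\Gamma_x$. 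Because all the elements, products of edges and relations entering the annihilator computation for $y$ already lie below $y\le x$, this comparison gives $C_\Gamma(y)\iff C_{\Gamma_x}(y)$ for every $y\le x$.

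Now apply the Section~3 criterion twice. For $\Gamma$: $R(\Gamma)$ is Koszul $\iff$ $C_\Gamma(y)$ holds for every vertex $y$ of $\Gamma$. For each $x$: $R(\Gamma_x)$ is Koszul $\iff$ $C_{\Gamma_x}(y)$ holds for every $y\le x$ $\iff$ $C_\Gamma(y)$ holds for every $y\le x$. If $R(\Gamma)$ is Koszul, then all $C_\Gamma(y)$ hold, so each $R(\Gamma_x)$ is Koszul. Conversely, if $R(\Gamma_x)$ is Koszul for every $x$, then taking $y=x$ in the criterion for $\Gamma_x$ yields $C_\Gamma(x)$ for each $x$, whence $R(\Gamma)$ is Koszul. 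This completes the argument modulo Section~3.

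The main obstacle, then, is not in this corollary at all but in the prior claim that Koszulity of $R(\Gamma)$ is equivalent to the conjunction of the purely local conditions $C_\Gamma(x)$: one must show that the minimal graded free resolution of the trivial $R(\Gamma)$-module (equivalently, the relevant $\Ext$-groups, equivalently the cohomology of the complexes $R_\Gamma(n,k)$) decomposes along the vertices of $\Gamma$, with the contribution of $x$ computed entirely inside $\Gamma_x$. Producing that decomposition is precisely the purpose of the explicit annihilator formulas of Section~3; once they are in hand, the vertex-by-vertex bookkeeping above is routine.
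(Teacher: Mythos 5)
Your proposal is correct and follows essentially the same route as the paper: the paper derives this corollary by combining the annihilator criterion of Theorem \ref{formula} with the embedding $R(\Gamma_x)\hookrightarrow R(\Gamma)$ of Lemma \ref{subring}, which is exactly your vertex-local condition $C_\Gamma(x)$ plus the observation (resting on Lemma \ref{rtideal} and the monomial relations) that the condition at $y\le x$ is computed entirely inside $\Gamma_x$. No substantive difference from the paper's argument.
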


Section 2 of this paper contains background definitions and simple results about posets attached to regular CW complexes. Section 3 proves our main algebraic results. Section 4 introduces the new cohomology groups $H_X(n,k)$ associated to a regular CW complex.  We prove all of our main theorems in Section 5. 

The authors would like to heartily thank Hal Sadofsky for many useful discussions and especially for helping us find a preliminary version of Example \ref{singular}.
 
\section{Layered Graphs and Posets}\label{posets}

A {\it layered graph} is a graph $\Gamma$ with a set of vertices
$V= V(\Gamma)= \bigcup_{i=0}^d V_i$ and a set of directed edges $E = E(\Gamma) = 
\bigcup_{i=1}^d E_i$ satisfying the following:

(a) $V_0 = \{ \bar 0 \}$

(b) If $e:a\to b$ is an edge and $e\in E_i$, then $a \in V_i$ and $b\in V_{i-1}$.

(c) There is at most one edge from any $A \in V_i$ to any $B\in V_{i-1}$.

(d) $\bar 0$ is the unique sink in $\Gamma$.   

For our purposes, it is very convenient to use the language of ranked posets.
A finite poset $(P,<)$ is said to be {\it ranked} if $P$ has unique minimal element $\bar 0$ and for any $x$, every maximal chain in the closed interval $[\bar 0,x]$ has the same length.  The common length 
of maximal chains in $[\bar 0,x]$ is then called the rank of $x$, $rk(x)$.

It is clear that the notion of layered graph is the same as the notion of ranked poset, where edges are identified with pairs $(b,a)$ such that $\{a<b\}$ is a maximal chain in $[a,b]$.  We will use the notions of layered graph and ranked poset interchangeably.  

Given a layered graph $\Gamma$ and vertex $a$, we write $\Gamma_a$ for $[\bar 0,a]$.  
For any $0 \le n\le rk(a)$ let
$S_a(n) = \{b  <a \,|\, rk(a)-rk(b) = n\}$.  
There is an equivalence relation on the set $S_a(1)$ defined by extending transitively the
relation: $b \sim c$ if $b=c$ or $S_b(1)\cap S_c(1) \ne \emptyset$.  We  denote this
equivalence relation as $\sim_a$.   We can now recall the following important definition from 
\cite{RSW2}.

\begin{defn}
The layered graph $\Gamma$ is {\it uniform} if for each $a$, the equivalence relation
$\sim_a$ on $S_a(1)$ has only one equivalence class.  
\end{defn}

It is clear that $\Gamma$ is uniform if and only if $\Gamma_x$ is uniform for every 
$x \in \Gamma$.  We need to extend the idea behind uniform to lower layers of the graph.

\begin{defn}  Let $\Gamma$ be a layered graph.  Let $a$, $a'$ be two vertices
of rank $k$. 

A {\it down-up sequence} from $a$ to $a'$ in $\Gamma$ is a sequence of elements $a_0=a$, $a_1,\cdots, a_n=a'$ in $\Gamma$, of rank $k$,
and another sequence of elements $b_1,b_2,\ldots,b_n$ of rank $k-1$ such that for all 
$i$, $b_i<a_{i-1}$ and $b_i<a_i$.   We allow the case $n=0$, i.e. $a = a'$
 
 An {\it up-down sequence} from $a$ to $a'$ in $\Gamma$ is a sequence of elements $a_0=a$, $a_1,\cdots, a_n=a'$ of rank $k$
and another sequence of elements $b_1,b_2,\ldots,b_n$ in $\Gamma$, of rank $k+1$ such that for all $i$,
$a_{i-1}<b_i$ and $a_i<b_i$.  Again, we allow the case $n=0$ when $a = a'$. 
   
\end{defn}

One sees immediately that $\Gamma$ is uniform if and only if, for any $a,a' \in S_x(1)$, there is a
down-up sequence from $a$ to $a'$ in $\Gamma_x$.  This is then the base case for a simple inductive proof of the following lemma.

\begin{lemma}\label{updown}
Let $\Gamma$ be a uniform graph of rank $d$.  Fix  $x\in V_d$.  

(1)  For any $a,a'\in V_k(\Gamma_x)$ there is a down-up path from $a$ to
$a'$ in $\Gamma_x$. 

(2) For any $a,a' \in V_k(\Gamma_x)$ there is an up-down path from $a$ to $a'$ in 
$\Gamma_x$. 
\end{lemma}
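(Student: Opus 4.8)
The plan is to prove both statements simultaneously by induction on $d - k$, using uniformity as the base case and propagating connectivity up and down the poset. For the base case $k = d$, there is a unique vertex $x$, so both statements are trivial (take $n = 0$). More usefully, the genuine base case is $k = d-1$: if $a, a' \in S_x(1) = V_{d-1}(\Gamma_x)$, then uniformity of $\Gamma$ (applied at $x$) says that the equivalence relation $\sim_x$ has one class, which is exactly the assertion that there is a down-up sequence from $a$ to $a'$ in $\Gamma_x$; this gives part (1) at level $d-1$. For part (2) at level $d-1$ there is nothing to do unless we first dispose of part (1): I would actually prove (1) for all levels first, then derive (2).

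For the inductive step of part (1), suppose $a, a' \in V_k(\Gamma_x)$ with $k < d-1$, and assume part (1) holds at level $k+1$. Pick any $b$ with $a < b$ and any $b'$ with $a' < b'$ (these exist since $\bar 0$ is the unique sink and every vertex of positive rank lies below some vertex of rank one higher within $\Gamma_x$ — here one uses that $\Gamma_x = [\bar 0, x]$ so chains extend upward to $x$). Both $b$ and $b'$ lie in $V_{k+1}(\Gamma_x)$, so by the inductive hypothesis there is a down-up sequence from $b$ to $b'$ in $\Gamma_x$. Now I would "push this down one level": each consecutive pair $b_{i-1}, b_i$ in that sequence shares a common vertex of rank $k$ below both of them — wait, more carefully, a down-up sequence at level $k+1$ uses witnesses of rank $k$, and these witnesses are exactly what we need. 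Concretely, applying uniformity at each vertex appearing in the down-up sequence between $b$ and $b'$ lets us connect $a$ to $a'$ through a chain of vertices of rank $k$, where consecutive vertices are joined by down-up moves (two rank-$k$ vertices sitting under a common rank-$(k{+}1)$ vertex, then using $\sim$ at that vertex to move between their rank-$k$ shadows). This is the step I expect to be the main obstacle: bookkeeping the exact way an up-down sequence at level $k+1$ together with local uniformity assembles into a down-up sequence at level $k$, making sure the witnesses have the correct ranks and that the transitivity is handled cleanly.

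For part (2), once part (1) is known at all levels, I would argue dually. Given $a, a' \in V_k(\Gamma_x)$, I want an up-down sequence, i.e. to connect them through vertices of rank $k{+}1$. Choose $b, b'$ of rank $k+1$ above $a, a'$ respectively; it suffices to connect $b$ to $b'$ by a suitable sequence. Apply part (1) at level $k+1$ to get a down-up sequence from $b$ to $b'$ — but that uses witnesses of rank $k$, which is the wrong direction. Instead I would iterate: part (1) at level $k$ connects $a$ to $a'$ via a down-up sequence with rank-$(k{-}1)$ witnesses $b_i$, and then reinterpret each down-up move $a_{i-1} > b_i < a_i$ — no. The cleaner route: an up-down sequence from $a$ to $a'$ at level $k$ is precisely a down-up sequence "one level up" connecting a vertex above $a$ to a vertex above $a'$, combined with the fact (from part (1) applied within each $\Gamma_b$ for $b$ of rank $k+1$) that any two rank-$k$ vertices below a common $b$ are themselves joined by a down-up sequence in $\Gamma_b$, hence in $\Gamma_x$. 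So: take $b$ above $a$ and $b'$ above $a'$, connect $b$ to $b'$ by a down-up sequence at level $k+1$ (induction, part (1)), and at each down-up witness $c$ of rank $k$ sitting under consecutive $b_{i-1}, b_i$, note $a_{i-1} := $ any rank-$k$ vertex under $b_{i-1}$ and $c$ are connected by down-up in $\Gamma_{b_{i-1}}$... I will organize this so that the final sequence alternates correctly. I expect to need Lemma-internal remarks that $\Gamma_x$ is itself uniform (stated in the excerpt as "$\Gamma$ is uniform iff $\Gamma_x$ is uniform for every $x$") so that all inductive hypotheses apply inside $\Gamma_x$, and the whole argument lives entirely within the single interval $[\bar 0, x]$.
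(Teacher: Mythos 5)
Your part (1) is essentially the intended argument: induct on $d-k$, with the base case $k=d-1$ being exactly the reformulation of uniformity at $x$, and for the step choose $b>a$, $b'>a'$ of rank $k+1$ in $\Gamma_x$, take a down-up sequence $b=b_0,\dots,b_n=b'$ with rank-$k$ witnesses $c_1,\dots,c_n$ from the inductive hypothesis, and splice: $a,c_1\in S_{b_0}(1)$, $c_i,c_{i+1}\in S_{b_i}(1)$, $c_n,a'\in S_{b_n}(1)$, so uniformity at each $b_i$ supplies down-up sequences at level $k$ inside each $\Gamma_{b_i}\subseteq\Gamma_x$, and concatenation finishes the step. (Your phrase ``up-down sequence at level $k+1$'' there is a slip --- the inductive hypothesis gives a down-up sequence --- but the construction you describe is the right one, and your remark that maximal chains in $[a,x]$ climb one rank at a time, so that $b,b'$ exist, is correct.)

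Part (2), however, has a genuine gap, and it occurs exactly where you talk yourself out of the correct move. You write that applying part (1) at level $k+1$ gives a down-up sequence from $b$ to $b'$ ``but that uses witnesses of rank $k$, which is the wrong direction.'' It is precisely the right direction: with $a<b=b_0$, $a'<b'=b_n$ and witnesses $c_1,\dots,c_n$ of rank $k$, the sequence $a, c_1, c_2,\dots, c_n, a'$ together with the rank-$(k+1)$ elements $b_0, b_1,\dots, b_n$ \emph{is} an up-down sequence from $a$ to $a'$: indeed $a<b_0$ and $c_1<b_0$; $c_i<b_i$ and $c_{i+1}<b_i$ for each $i$; and $c_n<b_n$, $a'<b_n$. (If $k=d$ both parts are trivial since $V_d(\Gamma_x)=\{x\}$ and $n=0$ is allowed.) Your substitute plan --- interpolating, via part (1) applied inside each $\Gamma_{b_i}$, down-up sequences between rank-$k$ vertices under a common $b_i$ --- cannot produce an up-down sequence at all, because those interpolations pass through rank-$(k-1)$ witnesses, which are not permitted in an up-down sequence at level $k$; moreover no interpolation is needed, since two rank-$k$ vertices under a common $b_i$ are already joined by a single up-down move with witness $b_i$. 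As written, your part (2) ends with an unfinished construction (``I will organize this so that the final sequence alternates correctly''), so the claim is not established; replacing it with the one-line reading of the level-$(k+1)$ down-up sequence closes the gap.
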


We need one final combinatorial definition.

\begin{defn}(\cite{handbook}, \cite{Bjorner1})
The layered graph $\Gamma$ is {\it thin} if for each pair of vertices 
$a$ and $b$ for which $b \in S_a(2)$, $[b,a]$ has exactly four elements.   I.e. for any two vertices $a$ and $b$ such that 
$rk(a)-rk(b)=2$ there are either
no paths from $a$ to $b$ or exactly 2 paths from $a$ to $b$. 
\end{defn}

We now turn to topological examples.  We begin by recalling some definitions.  In a CW-complex $X$, we denote an open $k$-cell as $e_\alpha^k$ or simply $e_\alpha$ and the associated attaching
map on the closed $k$-disc by $\Phi_\alpha:D_\alpha^k \to X$.  We will only ever consider {\it finite} CW complexes.

\begin{defn}
Let $X$ be a finite CW-complex of dimension $d$.  Then $X$ is {\it regular} if for each $\alpha$ the attaching map $\Phi_\alpha:D_\alpha^n \to X$ is an embedding.  We say $X$ is {\it pure} if $X$ is the union of all of its (closed) $d$-cells.  The underlying topological space of $X$ will be denoted
$||X||$. 
\end{defn}

In a regular CW-complex $X$ we may, and will, identify any closed $n$-cell 
$D_\alpha^n$ with its image in $X$.  The interior of this cell is then $e_\alpha^n$ and the boundary is a homeomorphic image of $S^{n-1}$.   Suppose that $X$ is pure of dimension $d$.  On the set of all
$d$-cells of $X$ define $\alpha \sim \beta$ if $\alpha = \beta$ or if
$\partial D_\alpha^d$ and $\partial D_\beta^d$ share a common $(d-1)$-cell. We extend $\sim$ transitively to an equivalence relation on the $d$-cells of $X$.

\begin{defn}
A pure regular CW complex of dimension $d$ is {\it connected by $(d-1)$-faces} if there is a unique equivalence class of $d$-cells under $\sim$. 

\end{defn}

Clearly a complex that is connected by $(d-1)$-faces  is connected (even when $d=0$.)

We obtain various layered graphs, or ranked posets from the face poset of a regular CW complex $X$.  Let $P(X)$ be the face poset of the regular CW complex $X$, where $\alpha <\beta $ if and only if $e_\alpha \subset D_\beta$. 
Let $\bar P(X)$ be the extended poset $\bar P(X) = P(X)\cup \{\bar 0\}$ with $\bar 0 < \alpha$ for all 
$\alpha \in P(X)$.  Similarly let $\hat P(X) = P(X)\cup \{\bar 0, \bar 1\}$, where $\bar 0 < \alpha < \bar 1$ for all $\alpha\in P(X)$. 

Purity of a CW complex is the hypothesis required to know that $\hat P(X)$ is a ranked poset.  
Connected by $(d-1)$-faces  is the hypothesis required to know that $\hat P(X)$ is uniform.

The following results are standard and can be found in \cite{Bjorner1}, 
\cite{handbook} and \cite{massey}.

\begin{lemma}  Let $X$ be a regular CW-complex of dimension $d$.

(1) If $||X|| = S^d$ then $X$ is pure and connected by $(d-1)$-faces.  Moreover, $\bar P(X)$ is a ranked poset with $rk(\alpha) = dim(e_\alpha)+1$ and $\bar P(X)$ is uniform and thin.
 
(2) For any $X$,  $\bar P(X)$ is a thin ranked poset.  

(3).  If $X$ is pure, then $\hat P(X)$ is a ranked poset.  If $X$ is also connected by $(d-1)$-faces, then 
$\hat P(X)$ is uniform. 
\end{lemma}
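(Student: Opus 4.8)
Everything here is classical, so the plan is to reduce each assertion to a handful of standard structural facts about regular CW complexes, organized by an induction on $d=\dim X$; the only genuinely topological content is part~(1), and the rest is bookkeeping with the face poset. I would first record the facts to be used (see \cite{Bjorner1}, \cite{handbook}): if $e_\alpha$ is an $n$-cell of a regular CW complex, then the closed cell $D_\alpha^n$ is a subcomplex, $\partial D_\alpha^n:=D_\alpha^n\setminus e_\alpha$ is a regular CW-subcomplex, and $(D_\alpha^n,\partial D_\alpha^n)\cong(D^n,S^{n-1})$. A routine induction on dimension then identifies the interval $(\bar 0,\alpha]$ of $\bar P(X)$ with the face poset of $D_\alpha^n$ and shows that $P(X)$ is graded by cell-dimension; equivalently $\bar P(X)$ is ranked with $rk(\alpha)=\dim e_\alpha+1$ and $rk(\bar 0)=0$. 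This disposes of the ``ranked poset'' clauses of (1) and (2).

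For thinness and uniformity of $\bar P(X)$ (for an arbitrary regular $X$, which yields all of (2) and part of (1)), the plan is to apply part~(1) in lower dimensions to the boundary spheres $\partial D_\alpha$. If $rk(\alpha)-rk(\beta)=2$ with $\beta=\bar 0$, then $\alpha$ is a $1$-cell and $\partial D_\alpha^1\cong S^0$ is its two endpoints, distinct by regularity, so $[\beta,\alpha]$ has four elements; if $\beta\ne\bar 0$, then $e_\beta$ is a codimension-one cell of the sphere $\partial D_\alpha$, and the cells strictly between $\beta$ and $\alpha$ are exactly the top-dimensional cells of $\partial D_\alpha$ having $e_\beta$ as a face, of which there are exactly two by part~(1) applied to $\partial D_\alpha$, so again $[\beta,\alpha]$ has four elements. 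Similarly, for a $k$-cell $a$ with $k\ge 2$ the pair $(S_a(1),\sim_a)$ is the set of $(k-1)$-cells of $\partial D_a\cong S^{k-1}$ under ``shares a $(k-2)$-face'', which has one class because $\partial D_a$ is connected by $(k-2)$-faces (part~(1)); for $k=1$ the two endpoints share the common face $\bar 0$; and for $rk(a)\le 1$ there is nothing to check. Hence $\bar P(X)$ is thin and uniform.

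The topological core is part~(1): if $||X||=S^d$ then $X$ is pure and connected by $(d-1)$-faces. For purity I would argue that a point of $||X||$ lying in no closed $d$-cell lies in the $(d-1)$-skeleton $X^{(d-1)}$, which is a finite union of closed cells of topological dimension at most $d-1$ and hence nowhere dense in the $d$-manifold $S^d$ by invariance of domain; since the union of the closed $d$-cells is closed, it must then equal $S^d$. For connectivity by $(d-1)$-faces, the cases $d\le 1$ are immediate, and for $d\ge 2$ I set $Y=||X||\setminus X^{(d-2)}$, an open connected $d$-submanifold of $S^d$ (a subcomplex of codimension at least $2$ does not disconnect a connected manifold of dimension at least $2$), whose cells are exactly the $d$-cells and the $(d-1)$-cells of $X$; the local product structure of $Y$ along an interior point of a $(d-1)$-cell, together with the embeddedness of the attaching maps, makes each $(d-1)$-cell a face of exactly two distinct $d$-cells, and connectedness of $Y$ then forces the dual graph on the $d$-cells to be connected. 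This establishes (1), and with it the remaining claims of the previous two paragraphs.

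Finally, for part~(3): if $X$ is pure then every maximal element of $P(X)$ is a $d$-cell, since each cell lies in the closure of some $d$-cell; hence every maximal chain of $\hat P(X)=P(X)\cup\{\bar 0,\bar 1\}$ from $\bar 0$ to $\bar 1$ is a maximal chain of $\bar P(X)$ from $\bar 0$ to a $d$-cell (of length $d+1$) followed by the cover to $\bar 1$, so $\hat P(X)$ is ranked. If moreover $X$ is connected by $(d-1)$-faces, then $(S_a(1),\sim_a)$ is the same in $\hat P(X)$ as in $\bar P(X)$ for every $a\ne\bar 1$, hence has one class, while $S_{\bar 1}(1)$ is the set of $d$-cells of $X$ and $b\sim_{\bar 1}c$ holds iff $b$ and $c$ share a $(d-1)$-face, so $\sim_{\bar 1}$ has a single class precisely because $X$ is connected by $(d-1)$-faces; thus $\hat P(X)$ is uniform. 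I expect the one real obstacle to be the topological content of part~(1) --- recognizing a regular CW decomposition of $S^d$ as a thin pseudomanifold with connected dual graph --- since that is where point-set and manifold topology genuinely enter; one must also be careful to run the induction on dimension so that the thinness and uniformity arguments for $\bar P(X)$ may legitimately invoke part~(1) for the boundary spheres.
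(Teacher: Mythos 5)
The paper offers no proof of this lemma---it is cited as standard from \cite{Bjorner1}, \cite{handbook} and \cite{massey}---so there is no in-paper argument to compare against; your plan is a correct reconstruction of the standard one, with the right division of labor (the only genuine topology being that a regular CW decomposition of $S^d$ is a pure thin pseudomanifold with connected dual graph, everything else being poset bookkeeping via $\partial D_\alpha\cong S^{\dim e_\alpha-1}$). The one organizational point to watch is that your induction on dimension must carry along the pseudomanifold property (each codimension-one cell of a regular CW sphere lies in exactly two top-dimensional cells), which you establish in the course of proving part (1) but which is not literally among the lemma's stated conclusions; the four-element intervals in the thinness argument for general $X$ depend on invoking exactly that strengthened form of (1) for the boundary spheres.
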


We remark that in part (3) above, $\hat P(X)$ may not be thin, although its 
sub-poset $\bar P(X)$ will always be thin.  For example, if $X$ is the simplicial complex obtained by gluing three triangular 2-cells along a common edge, then $\hat P(X)$ is immediately seen to not be thin.  

Let $\alpha$ and $\beta$ index cells in our CW-complex $X$.  By a path from
$\beta$ to $\alpha$ we mean any maximal chain in the interval $[\alpha, \beta]$.  Let 
$\Pi(\beta,\alpha)$ be the set of all such paths.  Given
$\pi_1 = \{\alpha = \alpha_0<\alpha_1< \cdots < \alpha_k=\beta\}$ and 
$\pi_2 = \{\alpha = \gamma_0<\gamma_1< \cdots < \gamma_k=\beta\}$, two paths in
$\Pi(\beta,\alpha)$, we say that $\pi_1$ and $\pi_2$ are diamond equivalent, $\pi_1\diamond \pi_2$, if
either $\pi_1 = \pi_2$ or there is a unique $j$ for which $\alpha_j \ne \gamma_j$.  This is not 
an equivalence relation, but we extend $\diamond$ transitively to an equivalence relation on 
$\Pi(\beta,\alpha)$.

\begin{lemma}\label{diamond}
If $X$ is a regular CW complex, then for any cells $\alpha < \beta$, $\Pi(\beta,\alpha)$ consists of a single $\diamond$-equivalence class. 
\end{lemma}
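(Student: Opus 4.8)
The plan is to prove Lemma~\ref{diamond} by induction on $\mathrm{rk}(\beta)-\mathrm{rk}(\alpha)$, using the fact that the closed interval $[\alpha,\beta]$ in the face poset of a regular CW complex is itself (order-isomorphic to) the face poset of a regular cell decomposition of a sphere, together with a standard connectivity/shellability property of such intervals. First I would record the base cases: if $\mathrm{rk}(\beta)-\mathrm{rk}(\alpha)\le 1$ there is nothing to prove, and if $\mathrm{rk}(\beta)-\mathrm{rk}(\alpha)=2$ the interval $[\alpha,\beta]$ is ``thin'' in the sense already used above --- by Lemma~\ref{diamond}'s predecessor (part~(2) of the previous lemma, $\bar P(X)$ is a thin ranked poset) there are exactly two paths, and by definition they differ in exactly one spot, hence are $\diamond$-equivalent.

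For the inductive step, fix $\alpha<\beta$ with $\mathrm{rk}(\beta)-\mathrm{rk}(\alpha)=k\ge 3$, and let $\pi_1=\{\alpha=\alpha_0<\cdots<\alpha_k=\beta\}$ and $\pi_2=\{\alpha=\gamma_0<\cdots<\gamma_k=\beta\}$ be two paths in $\Pi(\beta,\alpha)$. The key step is to move from $\pi_1$ to $\pi_2$ by first arranging that their top entries agree, i.e. $\alpha_{k-1}=\gamma_{k-1}$; once that is done the truncated paths live in $\Pi(\alpha_{k-1},\alpha)$, which has strictly smaller height, so the inductive hypothesis finishes the job (prepending the common top vertex $\beta$ preserves $\diamond$-equivalence). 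To arrange $\alpha_{k-1}=\gamma_{k-1}$, I would use that the open interval $(\alpha,\beta)$ of coatoms-and-below is connected through its rank-$(k-1)$ elements: the set $S_\beta(1)\cap[\alpha,\beta]$ of elements covered by $\beta$ and lying above $\alpha$ is connected in the graph where $c,c'$ are joined when they share a common element below them inside $[\alpha,\beta]$. This is exactly the combinatorial input that $[\alpha,\beta]$ is the face poset of a PL sphere (so its proper part is connected, indeed its $1$-skeleton/dual graph is connected), which is the content of the cited references \cite{Bjorner1}, \cite{handbook}. Walking along such a connecting chain $\alpha_{k-1}=c_0,c_1,\dots,c_m=\gamma_{k-1}$, at each step $c_j,c_{j+1}$ have a common lower bound $d_j$ of rank $k-2$ above $\alpha$; choosing any path from $d_j$ to $\alpha$ and splicing gives paths through $c_j$ and through $c_{j+1}$ that agree below $d_j$ and hence (again by induction applied to $[c_j, \beta]$ and $[c_{j+1},\beta]$, which have height $k-1$, or directly since they differ only in positions $k-1,k-2$) are $\diamond$-equivalent. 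Chaining these equivalences connects a path with top vertex $\alpha_{k-1}$ to one with top vertex $\gamma_{k-1}$.

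I expect the main obstacle to be the rigorous statement and citation of the ``connectivity through coatoms'' fact in the exact form needed: one wants that for a regular CW complex, every closed interval $[\alpha,\beta]$ in $P(X)$ (or in $\bar P(X)$) has the property that its coatoms are connected via shared subcoatoms within the interval. This follows from the classical fact that such an interval is the face lattice of a regular CW decomposition of a sphere $S^{k-1}$ --- equivalently $[\alpha,\beta]$ is ``thin and shellable'', or at minimum that the order complex of the open interval is connected when $k\ge 2$ --- but pinning down which cited reference gives precisely this and in precisely this generality (regular, not simplicial) is the delicate point; everything else is bookkeeping with the transitive closure of $\diamond$. An alternative that sidesteps some of this: induct instead by showing directly that any two paths differing first at position $j$ and agreeing thereafter are connected, then bootstrap --- but that still requires the same interior-connectivity of rank-$j$ strata of $[\alpha_{j+1},\beta']$, so the essential topological input is unavoidable.
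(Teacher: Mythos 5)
Your combinatorial skeleton (induct on $\mathrm{rk}(\beta)-\mathrm{rk}(\alpha)$, normalize the top coatom by walking along a chain of coatoms with common lower bounds in the open interval, splice, and invoke the inductive hypothesis on shorter intervals) is sound, and in fact it needs only the graph-connectivity of the open interval $(\alpha,\beta)$ --- exactly the same input the paper needs. The genuine gap is at the point you yourself flagged: you propose to obtain this connectivity by asserting that $[\alpha,\beta]$ is the face poset of a regular CW decomposition of a sphere (hence thin and shellable), citing the poset/CW references. That assertion is not available in this generality and is in fact false for arbitrary regular CW complexes: upper intervals $[\alpha,\beta]$ with $\alpha\neq\bar 0$ need not be CW posets at all. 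For instance, take a non-PL triangulation of $S^5$ obtained by double suspension of a triangulated homology $3$-sphere and attach a single regular $6$-cell $\beta$ along it; for a suspension vertex $\alpha$ the open interval $(\alpha,\beta)$ is the face poset of the suspension of the homology sphere, which is not a sphere. Sphericity/shellability of such intervals is a PL or polytopal phenomenon, not a consequence of regularity, so as written your proof rests on an unproven (and, in the strong form you state, false) lemma. A minor additional point: in your coatom graph the common lower bound must be required to lie strictly in $(\alpha,\beta)$ (a common bound equal to $\alpha$ gives no splice and no height drop); your phrase ``of rank $k-2$ above $\alpha$'' suggests you intend this, but the earlier formulation ``a common element below them inside $[\alpha,\beta]$'' is too weak.

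The paper closes exactly this gap by proving the connectivity directly and topologically, remarking explicitly that it could not find a reference. After reducing to $X=X_\beta$ with $\dim\beta-\dim\alpha\ge 3$, choose a point $x\in e_\alpha\subset\partial D_\beta\cong S^{d-1}$ and a small open $(d-1)$-ball $B$ around $x$ in $\partial D_\beta$ meeting only cells $e_\gamma$ with $\gamma\in[\alpha,\beta)$; since $e_\alpha$ has codimension at least $2$ in $B$, the set $B\setminus e_\alpha$ is connected, and this forces the open interval $(\alpha,\beta)$ to be connected as a graph. (The paper then organizes the induction differently from you: it assumes there are at least two $\diamond$-classes and shows, using the inductive hypothesis, that their vertex sets would disconnect $(\alpha,\beta)$, a contradiction; your normalize-the-top-coatom induction works just as well once connectivity is in hand, since graph-connectivity of $(\alpha,\beta)$ yields your coatom chain with intermediate bounds strictly above $\alpha$.) So to complete your proof you must either supply this topological connectivity argument or restrict the lemma to simplicial/polytopal/PL complexes where the shellability citation is legitimate; the remaining bookkeeping in your proposal is correct.
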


\begin{proof}
Since we are working in the poset $[\alpha,\beta] \subset \bar P(X)$, we may assume $X = X_\beta$.  Assume $\beta$ is a $d$-cell and $\alpha$ a $k$ cell.  We proceed by 
induction on $d-k$.  There is nothing to prove if $d-k=0$ or $d-k = 1$.   If $d-k = 2$, then the result follows from the fact that $\bar P(X)$ is thin.  For the remainder we assume 
$d-k \ge 3$.  

Let $[\pi_1], [\pi_2],\ldots, [\pi_r]$ be the distinct equivalence classes of $\diamond$-equivalence
on $\Pi(\beta,\alpha)$.  Suppose $r>1$.  For each $[\pi]$, let 
$E([\pi]) = \{\gamma \in (\alpha,\beta) | \gamma \in \pi'$ for some $\pi' \in [\pi]\}$.  By induction, the sets $E([\pi_i])$ must be disjoint, and furthermore, for any $i \ne j$ we can not find $\gamma \in E([\pi_i])$ and $\mu \in E([\pi_j])$ such that $\mu<\gamma$.  
Since the open interval $(\alpha,\beta)$ is the union of the sets $E([\pi_i])$, this shows that
$(\alpha,\beta)$ is not connected as a graph.  

Choose a point $x$ in $e_\alpha \subset X = X_\beta$.  Since $\partial X$ is a $d-1$-sphere,
there is an open neighborhood of $x$ in $\partial X$ homeomorphic to a $d-1$-ball. We may choose this 
ball $B$ small enough to ensure that $B \cap e_\gamma = \emptyset$ for all $\gamma \not\in [\alpha,\beta)$.   Since the codimension of $e_\alpha$ is at least 2 in $B$, the space $B \setminus e_\alpha$ must be connected.  But this ensures that the graph $(\alpha,\beta)$  is connected, contradicting the assumption of the previous paragraph.  This proves the lemma.
\end{proof}

Note: the fact that the open interval $(\alpha,\beta)$ is connected as a 
graph, for $dim(\beta)-\dim(\alpha) >2$, seems to be standard in the literature. 
However, we do not have a reference for it.

\section{Algebraic results.}\label{algebra}

Let $F$ be a field.  Let $\Gamma$ be a uniform ranked poset with unique minimal element $\bar 0$.  It is convenient to let $\Gamma' = \Gamma \setminus \{\bar 0\}$.
We begin by giving an explicit presentation of the quadratic dual algebra, $R(\Gamma)$, of the associated graded algebra of $A(\Gamma)$.  The reader is referred to 
\cite{RSW2} for the proof that $A(\Gamma)$ is quadratic and for the description
of $grA(\Gamma)$. 

Let $W$ be the $F$-vector space with basis
$\{r_x | x \in \Gamma' \}$. We extend the notion of rank to $W$, that is
$rk(v) = n$ if $v$ is a linear combination of elements $r_x$ all of which have rank $n$. For any $i\ge 0$, let
$$r_x(n) = \sum_{y\in S_x(n)} r_y \in W.$$
In particular $r_x(0) = r_x$.   We note that $rk(r_x(n)) = rk(x) -n$ for any $0\le n \le rk(x)-1$. 
It is convenient to set $r_x(n) = 0$ for any $n\ge rk(x)$. 
Let $I_2 = I_2(\Gamma)$ be the subspace of
$W \otimes W$ spanned by the elements 
$$\{r_x \otimes r_y | x,y\in \Gamma', y \not\in S_x(1)\} \,\cup\, \{r_x \otimes r_x(1) | x\in \Gamma'\}.$$
Let $I = \bigoplus_n I_n$ be the graded ideal of $T(W)$ generated by $I_2$.  Then $R = R(\Gamma) :=
T(W)/I(\Gamma)$.  

For any $d\ge 2$, $I_d = I_2 \otimes W^{\otimes d-2} + W \otimes I_{d-1}$.  Suppose  $\sum_x r_x \otimes u_x \in I_d$.  We decompose this as the sum of two terms, 
$\sum_x r_x\otimes v_x \in I_2 \otimes W^{\otimes d-2}$ and  
$\sum_x r_x\otimes w_x \in W \otimes I_{d-1}$.     Clearly then $r_x\otimes w_x \in I_d$ for all $x$.
Since the relations of $R(\Gamma)$ are all of the form $r_y\otimes l_{i,y}$ for some linear terms
$l_{i,y} \in W$, we see that $\sum_x r_x\otimes v_x = \sum_x (r_x \otimes \sum_i \alpha_{i,x} l_{i,x} \otimes z_x)$ for some coefficients $\alpha_{i,x}$ and $z_{i,x} \in W^{\otimes d-2}$.  But this shows
$r_x \otimes v_x \in I_d$ for each $x$ and thus we conclude $r_x\otimes u_x \in I_d$ for each $x$.
The  following innocuous looking lemma follows immediately from these observations. This lemma is a powerful tool.

\begin{lemma}\label{rtideal} 
Let $\Gamma$ be a  uniform ranked poset and $R = R(\Gamma)$. Then 
$$R_+ = \bigoplus_{x\in \Gamma'} r_x R.$$

\end{lemma}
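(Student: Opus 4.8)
The plan is to prove two things: that the right ideals $r_xR$, $x\in\Gamma'$, span $R_+$, and that their sum is direct. The spanning statement is immediate and uses nothing beyond the fact that $R$ is generated in degree one: for every $d\ge 1$ one has $R_d = R_1R_{d-1} = \sum_{x\in\Gamma'}r_xR_{d-1}$, while $R_1 = \bigoplus_{x\in\Gamma'}Fr_x$; summing over $d$ gives $R_+ = \sum_{x\in\Gamma'}r_xR$. So the whole content of the lemma is the directness of this sum.

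To prove directness it is enough, after splitting into homogeneous components, to show that the $F$-linear map $\bigoplus_{x\in\Gamma'}R_{d-1}\to R_d$ sending $(u_x)_x$ to $\sum_x r_xu_x$ is injective for each $d\ge 1$; the case $d=1$ is clear since $\{r_x\}$ is a basis of $W$. For $d\ge 2$ I would exploit the one-sidedness of the defining relations of $R$. Every generator of $I_2$ has the form $r_y\otimes\ell$ with $\ell\in W$, so with respect to the decomposition $W^{\otimes d} = \bigoplus_{x\in\Gamma'}r_x\otimes W^{\otimes d-1}$ one gets
$$I_d \;=\; I_2\otimes W^{\otimes d-2} \,+\, W\otimes I_{d-1} \;=\; \bigoplus_{x\in\Gamma'}\, r_x\otimes\bigl(V_x\otimes W^{\otimes d-2} + I_{d-1}\bigr),$$
where $V_x\subseteq W$ denotes the span of $\{r_y\mid y\in\Gamma',\ y\notin S_x(1)\}\cup\{r_x(1)\}$. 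Given $u_x\in R_{d-1}$ with $\sum_x r_xu_x=0$, lift each $u_x$ to $\tilde u_x\in W^{\otimes d-1}$; then $\sum_x r_x\otimes\tilde u_x$ lies in $I_d$, and reading off the $r_x$-component in the displayed direct sum forces $r_x\otimes\tilde u_x\in I_d$ for every $x$, i.e. $r_xu_x=0$ in $R$. This is exactly the chain of observations recorded in the paragraph preceding the lemma.

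I do not expect a genuine obstacle here: the proof is a bookkeeping argument, and the one real point --- that the spanning set $\{r_x\}_{x\in\Gamma'}$ of $W$ actually induces a direct-sum decomposition of each $I_d$ --- is forced by the fact that the quadratic relations are all left multiples of the $r_x$'s. (Uniformity of $\Gamma$ enters only through the background results of \cite{RSW2}, which guarantee that $A(\Gamma)$ and $grA(\Gamma)$ are quadratic so that $R(\Gamma)$ has the stated presentation.) The mild care needed is to pass correctly between $R$ and $T(W)$ via lifts, and to note that, because everything is graded, it suffices to treat homogeneous elements.
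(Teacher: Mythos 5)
Your argument is essentially the paper's own: the paper proves exactly the componentwise statement that $\sum_x r_x\otimes u_x\in I_d$ forces $r_x\otimes u_x\in I_d$ for each $x$, using the fact that every defining relation has the form $r_y\otimes \ell$ with $\ell$ linear, and your formula $I_d=\bigoplus_{x\in\Gamma'} r_x\otimes\bigl(V_x\otimes W^{\otimes d-2}+I_{d-1}\bigr)$ is just a cleaner packaging of that observation, with the spanning half being immediate from generation in degree one. One small correction: the reduction you announce --- injectivity of the map $\bigoplus_{x}R_{d-1}\to R_d$, $(u_x)_x\mapsto\sum_x r_xu_x$ --- is false for $d\ge 2$ (for instance $r_x^2=0$ because $x\notin S_x(1)$), but this does no harm, since what your lifting argument actually proves, and all that directness of $\bigoplus_x r_xR$ requires, is the weaker statement that $\sum_x r_xu_x=0$ forces each $r_xu_x=0$.
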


Another consequence of the argument
used to prove \ref{rtideal} is the following lemma.

\begin{lemma}\label{subring}
Let $\Gamma$ be a uniform layered graph.  Then for any 
$x\in \Gamma'$, the natural algebra homomorphism $R(\Gamma_x) \to R(\Gamma)$
is injective. 
\end{lemma}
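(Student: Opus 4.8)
The plan is to exploit the defining presentation $R(\Gamma) = T(W)/I(\Gamma)$ together with the fact, observed in the paragraph preceding Lemma \ref{rtideal}, that the ideal $I$ ``factors on the left'': if $\sum_x r_x \otimes u_x \in I_d$ (decomposing according to which $r_x$ appears in the first tensor slot), then each summand $r_x \otimes u_x$ already lies in $I_d$. The natural map $\phi\colon R(\Gamma_x) \to R(\Gamma)$ is induced by the inclusion $W(\Gamma_x) \hookrightarrow W(\Gamma)$ sending $r_y \mapsto r_y$ for $y \in \Gamma_x \setminus\{\bar 0\}$; I first need to check this is well-defined, i.e. that the quadratic relations $I_2(\Gamma_x)$ map into $I_2(\Gamma)$. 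This is essentially immediate: for $y \in \Gamma_x'$ one has $S_y(1)$ computed inside $\Gamma_x$ equal to $S_y(1)$ computed inside $\Gamma$ (every element below $y$ is automatically below $x$, so $\Gamma_y$ is the same whether formed in $\Gamma_x$ or in $\Gamma$), hence $r_y(1)$ is the same element of $W$ and the generating relations match up verbatim. So $\phi$ is a well-defined graded algebra homomorphism.

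For injectivity, I would argue degree by degree. Suppose $u \in T(W(\Gamma_x))_d$ represents an element of $\ker\phi$, i.e. $u \in I_d(\Gamma)$. Write $u = \sum_{y \in \Gamma_x'} r_y \otimes u_y$ with $u_y \in T(W(\Gamma_x))_{d-1}$. By the left-factoring property of $I(\Gamma)$ recalled above, each $r_y \otimes u_y \in I_d(\Gamma)$. Now I want to descend: I claim that $r_y \otimes u_y \in I_d(\Gamma)$ with $y \in \Gamma_x'$ and $u_y$ supported on basis vectors $r_z$, $z \in \Gamma_x'$, forces $r_y \otimes u_y \in I_d(\Gamma_x)$. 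The point is that the generators of $I(\Gamma)$ of the form $r_y \otimes r_w$ with $y \in \Gamma_x'$ and $w \notin S_y(1)$: if $w \notin \Gamma_x$ then $r_y \otimes r_w$ can be ignored because, after the left-factoring reduction, only relations whose ``tail'' stays inside the span of $\{r_z : z \in \Gamma_x'\}$ can contribute to an element of $T(W(\Gamma_x))$ — this is exactly the content of the argument preceding Lemma \ref{rtideal}, applied one tensor slot at a time. More carefully: repeating the decomposition inductively on $d$, an element of $I_d(\Gamma)$ that lies in $T(W(\Gamma_x))$ is a sum of terms $r_{y_1}\otimes \cdots \otimes r_{y_{j}} \otimes (\text{relation}) \otimes (\text{tail})$ where each $r_{y_i}$ and each factor of the tail must, by support, lie in $W(\Gamma_x)$, and the middle relation is $r_{y_{j+1}} \otimes l$ with $r_{y_{j+1}} \otimes l$ forced to have $l \in W(\Gamma_x)$; since $y_{j+1} \in \Gamma_x'$ and the relation involves only $S_{y_{j+1}}(1) \subseteq \Gamma_x$, it is already a relation of $R(\Gamma_x)$. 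Hence $u \in I_d(\Gamma_x)$, so $u$ represents $0$ in $R(\Gamma_x)$, proving injectivity.

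An equivalent and perhaps cleaner packaging: produce a left inverse. Define a linear retraction $\rho\colon W(\Gamma) \to W(\Gamma_x)$ by $\rho(r_y) = r_y$ for $y \in \Gamma_x'$ and $\rho(r_y) = 0$ otherwise, extend to $T(W(\Gamma)) \to T(W(\Gamma_x))$ multiplicatively, and check it carries $I(\Gamma)$ into $I(\Gamma_x)$ — for this one checks on generators: $\rho(r_y \otimes r_w)$ is either $0$ (if $y$ or $w$ falls outside $\Gamma_x$) or $r_y \otimes r_w$ with $w \notin S_y(1)$ still holding in $\Gamma_x$; and $\rho(r_y \otimes r_y(1)) = r_y \otimes r_y(1)$ when $y \in \Gamma_x'$ (using again that $r_y(1)$ is the same in both), else $0$. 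But $\rho$ is not quite an algebra map after tensoring in the naive way since $\rho$ is defined multiplicatively on $T(W)$ — that is fine, $T(W) \to T(W(\Gamma_x))$, $r_{y_1}\cdots r_{y_m} \mapsto \rho(r_{y_1})\cdots\rho(r_{y_m})$, is an algebra homomorphism. Since it sends $I(\Gamma)$ into $I(\Gamma_x)$ it descends to $\bar\rho\colon R(\Gamma) \to R(\Gamma_x)$, and $\bar\rho \circ \phi = \mathrm{id}_{R(\Gamma_x)}$ on generators, hence everywhere. A split monomorphism is a monomorphism.

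The main obstacle is the bookkeeping in verifying that $\rho$ (equivalently, the left-factoring reduction) really does land $I(\Gamma)$ inside $I(\Gamma_x)$ rather than merely inside $T(W(\Gamma_x))$: one must be sure that no relation of $\Gamma$ involving a vertex outside $\Gamma_x$ can ``accidentally'' project to a non-relation of $\Gamma_x$, and that $S_y(1)$ is genuinely intrinsic to $\Gamma_y$ and hence insensitive to whether we sit inside $\Gamma_x$ or $\Gamma$. Both are true precisely because $\Gamma_x = [\bar 0, x]$ is a lower-order ideal of the poset, so $\Gamma_y = [\bar 0,y]$ for $y \le x$ is computed identically in either ambient poset; once this is nailed down the rest is formal.
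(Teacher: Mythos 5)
Your proposal is correct, and your retraction argument is in fact a cleaner route than the one the paper intends. The paper gives no separate proof: it simply asserts that the lemma is ``another consequence of the argument used to prove'' Lemma \ref{rtideal}, i.e.\ the left-factoring of $I_d(\Gamma)$ along the first tensor slot, iterated to show $I(\Gamma)\cap T(W(\Gamma_x))\subseteq I(\Gamma_x)$ --- which is essentially your first packaging. As you yourself sense, that packaging is slightly fuzzy as literally written: the step ``each factor of the tail must, by support, lie in $W(\Gamma_x)$'' does not follow immediately, since an element of $I_d(\Gamma)$ lying in $T(W(\Gamma_x))$ could a priori be expressed only as a sum of ideal terms involving vertices outside $\Gamma_x$ that cancel; one has to decompose $W(\Gamma)=W(\Gamma_x)\oplus W'$ and project. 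Your second argument does exactly this and closes the gap completely: the multiplicative retraction $\rho\colon T(W(\Gamma))\to T(W(\Gamma_x))$ sends each quadratic generator of $I(\Gamma)$ either to $0$ or to a generator of $I(\Gamma_x)$ (using, as you correctly isolate, that $\Gamma_x=[\bar 0,x]$ is a lower interval, so $S_y(1)$ and hence $r_y(1)$ are the same computed in $\Gamma_x$ or in $\Gamma$), hence $\rho(I(\Gamma))\subseteq I(\Gamma_x)$, $\rho$ descends to $\bar\rho$ with $\bar\rho\circ\phi=\mathrm{id}$, and injectivity follows; moreover applying $\rho$ to any $u\in I(\Gamma)\cap T(W(\Gamma_x))$ gives $u=\rho(u)\in I(\Gamma_x)$, which is precisely the rigorous form of the paper's implicit claim. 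What the splitting buys beyond the paper's approach is this immediacy (no induction on degree and no bookkeeping about which decomposition terms survive), at the cost of nothing.
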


Now we can give a simple criterion for the Koszul property in terms of annihilators of specific elements of $R(\Gamma)$.  We require some notation.  
For any $x\in \Gamma'$  and $n\ge 0$ we define: 
$ H_x(n) = \sum_{y\not\in S_x(n)} r_yR$. 
By \ref{rtideal}, the sum defining $H_x(n)$ is direct.

We recall that a module $M$ is {\it Koszul} if $M$ admits a linear projective resolution (cf. \cite{PP}). It is clear that any direct summand of a Koszul module is Koszul.

\begin{thm}\label{formula} 
Let $\Gamma$ be a uniform layered graph.  Then $R = R(\Gamma)$ is Koszul if and only if  for every $x \in \Gamma'$ and $0\le n\le rk(x)$:
$$rann_R(r_x(n)) = r_x(n+1)R \oplus H_x(n+1).$$

\end{thm}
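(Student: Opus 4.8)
The goal is to characterize Koszulity of $R = R(\Gamma)$ via the annihilator formula. The natural framework is the standard criterion that a graded algebra $R$ is Koszul if and only if the trivial module $F = R/R_+$ admits a linear projective resolution, combined with a step-by-step construction of that resolution using Lemma \ref{rtideal}. The decomposition $R_+ = \bigoplus_{x \in \Gamma'} r_x R$ from Lemma \ref{rtideal} says that the map $\bigoplus_{x} R(-1) \to R_+$ sending the generator in the $x$-summand to $r_x$ is surjective; its kernel is $\bigoplus_x \mathrm{rann}_R(r_x) = \bigoplus_x \mathrm{rann}_R(r_x(0))$. So the first syzygy of $F$ is controlled by the annihilators $\mathrm{rann}_R(r_x(0))$, and the quadratic relations force $\mathrm{rann}_R(r_x) \supseteq r_x(1) R + H_x(1)$ (since $r_x \otimes r_x(1) \in I_2$ and $r_x \otimes r_y \in I_2$ for $y \notin S_x(1)$). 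The content of the theorem is that Koszulity is equivalent to this containment being an equality, \emph{uniformly at every stage}, which is why the statement ranges over all $r_x(n)$, not just $r_x(0)$.

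First I would set up the bookkeeping: for each $n$, the element $r_x(n) = \sum_{y \in S_x(n)} r_y$ lies in $R_n$... no, in $R_1$ with rank $rk(x) - n$. The key structural observation is that $r_x(n) \cdot r_z = 0$ in $R$ when $z$ is "generic", and $r_x(n) \cdot r_x(n+1)$-type products vanish, so that $r_x(n+1)R + H_x(n+1) \subseteq \mathrm{rann}_R(r_x(n))$ always holds — this is the easy inclusion and follows directly from the definition of $I_2$ together with Lemma \ref{rtideal} (to see the sum $H_x(n+1)$ is direct and to identify which $r_y R$ summands appear). Then I would prove the main equivalence by induction on homological degree, building a candidate linear resolution of $F$ whose $n$-th term is $\bigoplus_{x : rk(x) \geq n} R(-n)$ (indexed appropriately — one generator for each chain-like datum, or more precisely using the elements $r_x(n)$ as the images of generators). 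At each stage, exactness of the candidate complex in the next degree is \emph{equivalent} to the annihilator of $r_x(n)$ being exactly $r_x(n+1)R \oplus H_x(n+1)$: if the formula holds at level $n$, the kernel of the map out of the degree-$n$ term splits as a direct sum of shifted copies of $R(-1)$-generated pieces, giving linearity at level $n+1$; conversely, a linear resolution forces the syzygies to be generated in the minimal possible degree, which pins down the annihilators.

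The technical heart is matching the combinatorics of $S_x(n)$ with the homological algebra. One must check that the relations $r_y \otimes l$ ($l$ linear) interact correctly so that $\mathrm{rann}_R(r_x(n))$, when the formula holds, is \emph{generated in degree $1$} by exactly the elements $\{r_y : y \notin S_x(n+1)\}$ together with $r_x(n+1)$ — and crucially that these generators' own annihilators are again of the same form (with $x$ replaced by various $y$, shifting $n$), so the induction closes. This uses Lemma \ref{subring} to pass to $\Gamma_x$ and the uniformity hypothesis to guarantee the $S_y(1)$ for $y \in S_x(n+1)$ knit together correctly (this is precisely where "one equivalence class" is needed — otherwise $r_x(n+1)$ would not be the right single element and the syzygy would not be generated as claimed). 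I expect \textbf{the main obstacle} to be exactly this closure step: verifying that the annihilator formula at level $n$, for all $x$ simultaneously, propagates to level $n+1$ — i.e., that the direct sum $r_x(n+1)R \oplus H_x(n+1)$ really is the kernel and that it is itself resolved linearly by the next term of the complex, which amounts to a careful dévissage using Lemma \ref{rtideal} applied inside each $r_y R$ and a compatibility check that the "overlap" terms $r_x(n+1)$ versus the $H$-part do not interfere. Once that propagation lemma is in hand, the theorem follows by assembling the linear resolution (for the "if" direction) and by minimality of linear resolutions (for the "only if" direction).
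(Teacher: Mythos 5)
Your overall skeleton (easy inclusion; formula $\Rightarrow$ Koszul by assembling a linear resolution via Lemma \ref{rtideal}; Koszul $\Rightarrow$ formula by analyzing syzygies) matches the paper's, and the ``if'' half of your plan is essentially the paper's argument. The genuine gap is in the ``only if'' direction, where you assert that ``a linear resolution forces the syzygies to be generated in the minimal possible degree, which pins down the annihilators.'' Two things go wrong. First, Koszulity of $R$ speaks directly only about the syzygies of the trivial module, i.e.\ about $rann_R(r_x)=rann_R(r_x(0))$; the modules $r_x(n)R$ for $n\ge 1$ are not terms of that resolution, so minimality does not apply to them without a further mechanism. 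The paper closes this by an induction on $n$ interleaved with the formula itself: the formula at level $n-1$ identifies $rann_R(r_x(n-1))$ with $J_x(n)=r_x(n)R\oplus H_x(n)$, giving a short exact sequence $0\to J_x(n)\to R\to r_x(n-1)R\to 0$; since $r_x(n-1)R$ is Koszul, so is $J_x(n)$, and $r_x(n)R$, being a direct summand of it (by Lemma \ref{rtideal}), is Koszul, whence $rann_R(r_x(n))$ is \emph{linearly generated}. Your proposal never supplies this step (the exact sequence plus ``a direct summand of a Koszul module is Koszul'').

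Second, even granted linear generation, minimality does not determine \emph{which} degree-one elements generate: one must compute $rann_R(r_x(n))\cap R_1$ explicitly and show that any linear $p$ supported on $S_x(n+1)$ with $r_x(n)p=0$ is a scalar multiple of $r_x(n+1)$. This is the only place uniformity enters, and your sketch points at the wrong sets: you invoke the $S_y(1)$ for $y\in S_x(n+1)$, i.e.\ connections through elements one level \emph{below}, but those impose no constraint on $p$; the relations constrain $p$ from above. The correct argument is: by the directness in Lemma \ref{rtideal}, $r_x(n)p=0$ forces $r_bp=0$ for every $b\in S_x(n)$, and the $n=0$ case then forces the coefficients of $p$ to be constant on each $S_b(1)$; to conclude that all coefficients on $S_x(n+1)$ agree one needs $S_x(n+1)$ to be connected through common covers in $S_x(n)$, i.e.\ up-down sequences inside $\Gamma_x$ (Lemma \ref{updown}(2)), which uniformity yields after an induction. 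Your phrase ``when the formula holds'' also makes this step circular as written for the only-if direction. With these two repairs your plan becomes the paper's proof; without them the converse direction is unproved.
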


\begin{proof}
Let $J_x(k) = r_x(k)R \oplus H_x(k)$.  The sum is direct by \ref{rtideal}.  For any 
$y\in S_x(n)$, $S_y(1) \subset S_x(n+1)$.  It follows that $r_x(n)r_x(n+1) =0$. Similarly, $r_x(n)r_z = 0$ for every $z\not\in S_x(n+1)$. This shows that the right hand side of the formula above is always contained in the left hand side.  

Assume $R$ is Koszul.  Let $F_R$ denote the trivial right $R$ module $R/R_+$.  Then $F_R$ must be a Koszul module.  We prove the formula $rann_R(r_x(k)) = J_x(k+1)$ by induction on $k$.  This is easiest to do if we simultaneously prove that the module 
$r_x(k)R$ is a Koszul module. 

Since $r_xR$ is a direct summand of the Koszul module $R_+$, $r_xR$ is a Koszul module for every 
$x$.  In particular, the right annihilator  of $r_x$ must be linearly generated.  But $J_x(1)$ is exactly the right ideal generated by the linear annihilators of $r_x = r_x(0)$.  This proves the base case, $k=0$.

Now we assume our claim for $k-1$.  Then for each $x$ a short exact sequence:
$$ 0 \to J_x(k) \to R \to r_x(k-1)R \to 0.$$
Since $r_x(k-1)R$ is a Koszul module,  $J_x(k)$ must also be a Koszul module and hence its direct summand $r_x(k)R$ is a Koszul module.  Hence the right annihilator of $r_x(k)$ must be linearly generated.  It suffices thus to prove that every linear right annihilator of $r_x(k)$ is in $J_x(k+1)$.

Let $p \in R_1$ and suppose $r_x(k)p=0$.  We must show $p\in J_x(k+1)$. Write
 $p = \sum_{y\in S_x(k+1)}\alpha_y r_y + \sum_{z\not\in S_x(k+1)} \beta_z r_z$.  Since $r_z \in J_x(k+1)$ for all $z \not\in S_x(k+1)$, we may assume all the coefficients $\beta_z$ are $0$.  Choose
 $y$ and $y'$ in $S_x(k+1)$.   We claim $\alpha_y = \alpha_{y'}$ and this claim will then prove
 that $p$ is a scalar multiple of $r_x(k+1)$, as required. 
 
  By \ref{updown}, we can choose an up-down sequence from $y$ to $y'$ in $\Gamma_x$, say $y =y_0, y_1,\ldots ,y_n = y'$ and $b_1,\ldots ,b_n$.  Since this is an up-down sequence,
 the elements $b_i$ are all in $S_x(k)$.   Since $r_x(k)p=0$, we have by \ref{rtideal}, $r_{b_i}p=0$ for 
 every $i$.  By the $k=0$ case of the theorem,  since $y_{i-1},y_i \in S_{b_i}(1)$, we must have
 $\alpha_{y_{i-1}} = \alpha_{y_i}$ for every $i$.  Hence $\alpha_y = \alpha_{y'}$, as required.  This completes the induction.
 
 Conversely, assume the formula $rann_R(r_x(k)) = J_x(k+1)$ for all $x$ and $k$.   This formula, together with the strong right ideal property of \ref{rtideal} assures us that we can build an (infinite) linear projective resolution of the right trivial module $F_R$. Hence $R$ is Koszul.
\end{proof}

\begin{rmk}\label{basecase}
Since $r_x(0) = r_x$, the formula $rann_R(r_x(0)) = r_x(1)R\oplus H_x(1)$ is true regardless of whether or not $R(\Gamma)$ is Koszul.
\end{rmk}

Combining Theorem \ref{formula} with Lemma \ref{subring} gives the following useful corollary. 

\begin{cor}\label{cyclic}
If $\Gamma$ is  uniform, then $R(\Gamma)$ is Koszul if and only if 
$R(\Gamma_x)$ is Koszul for every $x$ in $\Gamma'$
\end{cor}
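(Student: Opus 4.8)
The plan is to deduce this as a formal consequence of Theorem \ref{formula} together with the injectivity statement of Lemma \ref{subring}. The forward direction is nearly immediate: if $R(\Gamma)$ is Koszul, then since $\Gamma_x$ is again uniform (as noted in Section 2, $\Gamma$ is uniform iff $\Gamma_x$ is uniform for every $x$), I want to show the annihilator formula of Theorem \ref{formula} holds for $R(\Gamma_x)$. The key point is that the formula is an \emph{internal} statement about $R(\Gamma_x)$: for a vertex $z \in \Gamma_x'$ the elements $r_z(n)$, the ideals $r_z(n)R(\Gamma_x)$, and the subspaces $H_z(n)$ computed inside $R(\Gamma_x)$ must be compared with their counterparts inside $R(\Gamma)$. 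Lemma \ref{subring} says the map $R(\Gamma_x)\to R(\Gamma)$ is injective, and Lemma \ref{rtideal} gives the strong direct-sum decomposition $R_+ = \bigoplus r_y R$ in both algebras; combining these, I would check that $S_z(n)$ computed in $\Gamma_x$ agrees with $S_z(n)$ computed in $\Gamma$ (true because $\Gamma_x = [\bar 0, x]$ is a down-closed interval, so everything below $z$ lies in $\Gamma_x$), hence $r_z(n)$ is the same element under the inclusion, and an annihilator relation valid in the larger algebra restricts to one in the subalgebra. Since $R(\Gamma)$ Koszul gives $rann_{R(\Gamma)}(r_z(n)) = r_z(n+1)R(\Gamma) \oplus H_z(n+1)$ for all $z, n$, intersecting with the subalgebra $R(\Gamma_x)$ yields the corresponding formula there, so $R(\Gamma_x)$ is Koszul by Theorem \ref{formula}.

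For the converse, suppose $R(\Gamma_x)$ is Koszul for every $x \in \Gamma'$. I want to verify the criterion of Theorem \ref{formula} for $R = R(\Gamma)$, i.e. that $rann_R(r_x(n)) = r_x(n+1)R \oplus H_x(n+1)$ for every $x \in \Gamma'$ and every $0 \le n \le rk(x)$. Fix such an $x$. Here the crucial observation is again that all of $r_x(n)$, $r_x(n+1)$, and the relevant $r_y$ appearing in $H_x(n+1)$ (those $y \notin S_x(n+1)$ with $y < x$) live in the subalgebra $R(\Gamma_x)$, and that by Lemma \ref{rtideal} the right annihilator of $r_x(n)$ in $R$ decomposes compatibly along the $\bigoplus_{y \in \Gamma'} r_y R$ splitting. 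One should separate the contribution of vertices $y$ that are $\le x$ from those that are not: for $y \not< x$, every product $r_x(n) \cdot (\text{word starting with } r_y)$ is automatically zero and $r_y R \subseteq H_x(n+1)$, so these contribute nothing new. This reduces the annihilator computation entirely to the subalgebra $R(\Gamma_x)$, where Theorem \ref{formula} applies because $R(\Gamma_x)$ is Koszul, giving exactly the desired formula. Then Theorem \ref{formula}, applied now to $R(\Gamma)$, concludes that $R(\Gamma)$ is Koszul.

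The main obstacle, and the step deserving the most care, is the bookkeeping in the converse: one must argue cleanly that the right-annihilator computation in $R(\Gamma)$ genuinely localizes to $R(\Gamma_x)$ — that no element of $R(\Gamma)_+$ outside the image of $R(\Gamma_x)_+$ can create a spurious annihilator of $r_x(n)$, and that no such element fails to already lie in $H_x(n+1)$. This rests on Lemma \ref{rtideal} (the direct-sum decomposition $R_+ = \bigoplus_{y} r_y R$, which lets us work one leading generator $r_y$ at a time) and on Lemma \ref{subring} (so that the subalgebra relations are not accidentally collapsed in $R(\Gamma)$), plus the elementary poset fact that $S_x(k)$ is the same whether computed in $\Gamma$ or in $\Gamma_x$. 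Once these compatibilities are in hand, the corollary follows purely formally from Theorem \ref{formula}.
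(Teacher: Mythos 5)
Your strategy is exactly the paper's (the printed proof is literally ``combine Theorem \ref{formula} with Lemma \ref{subring}''), and your reductions are the right ones, but as written both directions stall at the same unproved point --- precisely the one your last paragraph worries about, and the ingredients you cite (\ref{rtideal}, \ref{subring}, and the equality of $S_x(k)$ in $\Gamma$ and $\Gamma_x$) do not close it. The missing fact is: for $y\le x$ one has $r_yR(\Gamma)\subseteq R(\Gamma_x)$ (identifying $R(\Gamma_x)$ with its image), because $r_ar_b=0$ in $R(\Gamma)$ unless $b\in S_a(1)$, so any nonzero monomial beginning with $r_y$ traces a descending chain inside $\Gamma_y\subseteq\Gamma_x$; equivalently, $r_w\mapsto r_w$ for $w\le x$ and $r_w\mapsto 0$ for $w\not\le x$ respects the defining relations and gives an algebra retraction $\pi_x\colon R(\Gamma)\to R(\Gamma_x)$ with $\pi_x$ the identity on the subalgebra (this also reproves Lemma \ref{subring}). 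In your converse argument, discarding the leading components $r_yu_y$ with $y\not< x$ is correct (they lie in $H_x(n+1)$ and are killed by $r_x(n)$), but what remains is $\sum_{y\in S_x(n+1)}r_yu_y$ whose tails $u_y$ are elements of $R(\Gamma)$, not visibly of $R(\Gamma_x)$; you cannot invoke Theorem \ref{formula} for the Koszul algebra $R(\Gamma_x)$ on this element until the containment above places it in the subalgebra. So ``this reduces the computation entirely to $R(\Gamma_x)$'' is, as stated, a non sequitur.

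The forward direction has the same gap in a different guise. Given $u\in R(\Gamma_x)$ with $r_z(n)u=0$, the formula in $R(\Gamma)$ yields $u=r_z(n+1)v+h$ with $h\in H_z(n+1)$, but ``intersecting with the subalgebra'' only tells you, via the directness in Lemma \ref{rtideal}, that the $r_w$-component of $u$ equals $r_wv$ for each $w\in S_z(n+1)$ separately; it does not by itself produce one element $v'\in R(\Gamma_x)$ with $u-r_z(n+1)v'$ lying in $H_z(n+1)$ computed inside $R(\Gamma_x)$, since the individual cofactors need not agree with any single subalgebra element. Applying $\pi_x$ to $u=r_z(n+1)v+h$ repairs this in one line: $u=r_z(n+1)\pi_x(v)+\pi_x(h)$, and $\pi_x(h)$ lands in the $H_z(n+1)$ of $R(\Gamma_x)$. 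With the retraction (or the monomial-chain observation behind it) added, both of your directions go through as outlined; without it, the two key steps are asserted rather than proved.
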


We would like to have an inductive tool for proving that $R(\Gamma)$ is Koszul.  According to  \ref{cyclic}, it suffices to consider the case when $\Gamma$ is an
interval, i.e. $\Gamma = \Gamma_x$ for some $x$.  An inductive tool would be one
which allowed us to prove that $R(\Gamma_x)$ is Koszul using the assumption
that $R(\Gamma_y)$ is Koszul for every $y \in (\bar 0,x)$.  
Our last algebraic result of this section will recode the information contained in 
\ref{formula} in terms of cohomology groups to give us just such a tool.   This requires some definitions and a preliminary result.   

\begin{defn} Let
$\Gamma$ be uniform of rank $d+1$. 
Define $d_\Gamma = \sum_{y\in \Gamma'} r_y\in R(\Gamma)$.  We also use 
$d_\Gamma$ to denote the function $d_\Gamma:R(\Gamma) \to R(\Gamma)$ given by left multiplication by $d_\Gamma$.  

For $0\le k\le n\le d$, set $R(n,k) = \sum_{rk(y) = n+1} r_y R_{n-k}$.
\end{defn}

Note that $R_{n-k}$ refers to the $n-k$ graded piece of $R$ and that for any $y$, 
$d_\Gamma = r_y(1) + \sum_{z\not\in S_y(1)} r_z$.  
Therefore $r_y d_\Gamma = 0$ for all $y$ and in particular $d_\Gamma^2 =0$.  For 
any $r_xs \in R(n,k)$, $r_zr_xs = 0$ unless $rk(z) = n+2$, in which case 
$r_zr_xs\in R(n+1,k)$.  In particular $d_\Gamma R(n,k) \subset R(n+1,k)$
(and $d_\Gamma R(d,k) = 0$). Thus, for each $0\le k\le d$ we have cochain complex:
$$ 0\to R(k,k) \buildrel d_\Gamma \over \to \cdots \buildrel d_\Gamma \over \to R(n,k) 
\buildrel d_\Gamma \over \to R(n+1,k) \buildrel d_\Gamma \over \to \ldots
\buildrel d_\Gamma \over \to R(d,k) \buildrel d_\Gamma \over \to 0 .$$

One could easily combine all of these cochain complexes into a single complex represented by $d_\Gamma: R_+ \to R_+$.  Unfortunately, doing so would 
only obfuscate the basic facts we need. 

\begin{lemma}\label{easycohom}
Assume $\Gamma$ is uniform and $\Gamma=\Gamma_x$ for some
$x$ of rank $d+1$. 

(1) $H^k(R(\cdot,k),d_\Gamma) = F$ for all $0\le k\le d$.

(2) $H^d(R(\cdot,k),d_\Gamma) = 0$ for all $0\le k<d$.

(3) $H^{d-1}(R(\cdot,k),d_\Gamma) = 0$ for all $0\le k<d-1$.
\end{lemma}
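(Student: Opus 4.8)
The plan is to read each of the three groups directly off the cochain complex $(R(\cdot,k),d_\Gamma)$, using as the only inputs: (i) the base-case annihilator identity of Remark \ref{basecase}, $rann_R(r_v)=r_v(1)R\oplus H_v(1)$, valid for every $v\in\Gamma'$; (ii) the strong right-ideal decomposition $R_+=\bigoplus_{v\in\Gamma'}r_vR$ of Lemma \ref{rtideal}; and (iii) the up-down connectivity of Lemma \ref{updown}(2). I would first record the two routine facts that drive everything. Expanding $d_\Gamma$ gives $d_\Gamma r_v=\sum_{w\in\Gamma'}r_wr_v$, and $r_wr_v=0$ unless $v\in S_w(1)$, so $d_\Gamma r_v$ is really a sum over the vertices $w$ covering $v$; in particular, since $\Gamma=\Gamma_x$ has $x$ as its \emph{unique} vertex of rank $d+1$, for every $v$ of rank $d$ this collapses to $d_\Gamma r_v=r_xr_v$. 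And restricting (i) to degree one, $rann_R(r_v)\cap R_1=F\,r_v(1)\oplus\langle r_w:w\notin S_v(1)\rangle$.

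For part (1): if $k=d$ the complex is $0\to R(d,d)\to 0$ with $R(d,d)=r_xR_0=F\,r_x$, so $H^d(R(\cdot,d),d_\Gamma)=F$. For $k<d$, the left-hand term is $R(k,k)=\bigoplus_{rk(v)=k+1}F\,r_v$, and for $\omega=\sum_{rk(v)=k+1}\alpha_vr_v$ the expansion above gives $d_\Gamma\omega=\sum_{rk(w)=k+2}r_w\bigl(\sum_{v\in S_w(1)}\alpha_vr_v\bigr)$; by Lemma \ref{rtideal} this is $0$ iff $r_w\bigl(\sum_{v\in S_w(1)}\alpha_vr_v\bigr)=0$ for each $w$ of rank $k+2$, and since $\sum_{v\in S_w(1)}\alpha_vr_v$ lies in $\langle r_v:v\in S_w(1)\rangle$, the degree-one annihilator formula forces it to be a scalar multiple of $r_w(1)$, i.e.\ $\alpha$ is constant on each $S_w(1)$. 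As two vertices of rank $k+1$ lie in a common $S_w(1)$ precisely when they are joined by a one-step up-down sequence, Lemma \ref{updown}(2) makes $\alpha$ globally constant, so $\ker(d_\Gamma\colon R(k,k)\to R(k+1,k))=F\,r_x(d-k)$ is one-dimensional and $H^k(R(\cdot,k),d_\Gamma)=F$.

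For part (2): the right-hand term is $R(d,k)=r_xR_{d-k}$ and $R(d+1,k)=0$, so $H^d(R(\cdot,k),d_\Gamma)$ is the cokernel of $d_\Gamma\colon R(d-1,k)\to R(d,k)$. Using $d_\Gamma r_v=r_xr_v$ for $rk(v)=d$, its image is $r_x\bigl(\sum_{rk(v)=d}r_vR_{d-1-k}\bigr)$; since $r_xR_1=r_x\langle r_w:rk(w)=d\rangle$ (the remaining $r_xr_w$ vanish) and $R$ is generated in degree one, this equals $r_xR_1R_{d-1-k}=r_xR_{d-k}=R(d,k)$, so $d_\Gamma$ is onto and the group is $0$. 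For part (3): I would first show $\ker(d_\Gamma\colon R(d-1,k)\to R(d,k))\subseteq r_x(1)R_{d-1-k}$. If $\omega=\sum_{rk(v)=d}r_vs_v$ satisfies $d_\Gamma\omega=r_x\omega=0$, then $\omega\in rann_R(r_x)=r_x(1)R\oplus H_x(1)$; but in the decomposition of Lemma \ref{rtideal} the element $\omega$ is supported only on the rank-$d$ slots while $H_x(1)=\sum_{w\notin S_x(1)}r_wR$ is supported on the complementary slots, so $\omega\in r_x(1)R$, hence $\omega\in r_x(1)R_{d-1-k}$ by homogeneity. It then remains to hit all of $r_x(1)R_{d-1-k}$ by coboundaries: given $s\in R_{d-1-k}$, I would fix any decomposition $s=\sum_{w\in\Gamma'}r_ws_w$ as in Lemma \ref{rtideal} and set $\eta=\sum_{rk(w)=d-1}r_ws_w$, the ``rank-$(d-1)$ truncation'' of this decomposition; then $\eta\in R(d-2,k)$, and expanding $d_\Gamma\eta$ and $r_x(1)s=\bigl(\sum_{rk(v)=d}r_v\bigr)s$ and matching the terms indexed by covering pairs $w<v$ with $rk(w)=d-1$, $rk(v)=d$ gives $d_\Gamma\eta=r_x(1)s$. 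Thus $r_x(1)R_{d-1-k}\subseteq\operatorname{im}(d_\Gamma\colon R(d-2,k)\to R(d-1,k))$, and combined with $\operatorname{im}\subseteq\ker\subseteq r_x(1)R_{d-1-k}$ this gives $H^{d-1}(R(\cdot,k),d_\Gamma)=0$ for $k<d-1$.

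The $d_\Gamma$-expansions and the bookkeeping with the $\bigoplus_vr_vR$ decomposition are routine; the step I expect to be the crux is the construction of $\eta$ in part (3) — the realization that truncating an \emph{arbitrary} $\bigoplus_wr_wR$-decomposition of $s$ to its rank-$(d-1)$ part yields a cochain whose coboundary is exactly $r_x(1)s$. The other delicate point is that parts (2) and (3) depend essentially on $x$ being the unique top vertex, so that near the top $d_\Gamma$ is left multiplication by $r_x$ and the base-case annihilator formula is available, whereas part (1) rests instead on the up-down connectivity of the layers of $\Gamma_x$.
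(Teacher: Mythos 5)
Your proof is correct and follows essentially the same route as the paper's: identify $d_\Gamma$ on $R(n,k)$ with left multiplication by $r_x(d-n-1)$, then read off the kernels and images from the annihilator formula of Remark \ref{basecase} together with the up-down connectivity argument from the proof of Theorem \ref{formula}. The only difference is one of exposition — you make explicit the identification $r_x(1)R_{d-1-k}=r_x(1)R(d-2,k)$ via the truncation of the $\bigoplus_w r_wR$ decomposition, a step the paper leaves implicit.
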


\begin{proof}
Since $\Gamma=\Gamma_x$,  $\{y \in \Gamma' | rk(y) = n+1 \} = S_x(d-n)$.  Thus we have
$d_\Gamma = \sum_{k=0}^d r_x(k)$. Moreover,
$d_\Gamma R(n,k) = r_x(d-n-1)R(n,k)$.

Using this last formula, we see $H^k(R(\cdot,k),d_\Gamma)$ is the
cohomology of 
$$ 0\to R(k,k) \buildrel r_x(d-k-1)\cdot \over \longrightarrow R(k+1,k).$$
The elements of $R(k,k)$ are linear and have rank $k+1$.  $r_x(d-k-1)$ has rank $k+2$.  Thus the uniformity of $\Gamma$, as shown in the proof of \ref{formula}, implies
that $rann_R(r_x(d-k-1))\cap R(k,k) = F\cdot r_x(d-k)$. Thus $H^k(R(\cdot,k),d_\Gamma)=F$.

Similarly, $H^d(R(\cdot,k),d_\Gamma)$ is the cohomology of:
$$ R(d-1,k) \buildrel r_x\over \to  R(d,k) \to 0.$$
For $k<d$, 
$R(d,k) = r_xR_{d-k} = r_x\sum_y r_y R_{d-k-1} = 
r_xR(d-1,k)$ since $r_xr_y = 0$ unless $rk(y) = d$.   Thus, for $k<d$, 
$H^d(R(\cdot,k),d_\Gamma)=0$.
 
Finally, for $k<d-1$, $H^{d-1}(R(\cdot,k),d_\Gamma)$ is the cohomology of:
$$ R(d-2,k) \buildrel r_x(1)\over \to  R(d-1,k) \buildrel r_x \over \to R(d,k).$$
But $rann(r_x) \cap R(d-1,k) = r_x(1)R(d-2,k)$ as in remark \ref{basecase}.
Thus  $H^{d-1}(R(\cdot,k),d_\Gamma)=0$
\end{proof}

Here now is our inductive tool.

\begin{thm}\label{R-cohomology}
Assume $\Gamma$ is uniform and $\Gamma=\Gamma_x$ for some
$x$ of rank $d+1$. Assume further $R(\Gamma_y)$ is Koszul for
every $y \in (\bar 0,x)$.  Then the algebra $R=R(\Gamma)$ is Koszul
if and only if for each $0\le k\le d$, $H^n(R(\cdot,k),d_\Gamma) = 0$ for $n\ne k$ and $H^k(R(\cdot,k),d_\Gamma) = F$.
\end{thm}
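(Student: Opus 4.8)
The plan is to use Theorem~\ref{formula} as the bridge: $R(\Gamma)$ is Koszul if and only if $rann_R(r_x(n)) = r_x(n+1)R \oplus H_x(n+1)$ for all $x \in \Gamma'$ and all $n$. Since we are assuming $R(\Gamma_y)$ is Koszul for every $y \in (\bar 0, x)$, Theorem~\ref{formula} (applied to the subgraphs $\Gamma_y$) together with Lemma~\ref{subring} tells us that the annihilator formula already holds for all vertices of rank $\le d$; the only vertex for which it remains to be checked is the top vertex $x$ itself, of rank $d+1$. So the whole problem reduces to showing that, under the stated inductive hypothesis, the single family of identities $rann_R(r_x(n)) = r_x(n+1)R \oplus H_x(n+1)$ (for $0 \le n \le d$) is equivalent to the cohomology vanishing $H^n(R(\cdot,k),d_\Gamma) = 0$ for $n \ne k$ (with $H^k = F$). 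Recall from the proof of Lemma~\ref{easycohom} that because $\Gamma = \Gamma_x$ we have $d_\Gamma = \sum_{k=0}^d r_x(k)$ and $d_\Gamma$ restricted to $R(n,k)$ is exactly multiplication by $r_x(d-n-1)$. So the differential in the cochain complex $R(\cdot,k)$ \emph{is} multiplication by the elements $r_x(\cdot)$, which is precisely the setting of the annihilator formula.

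The key steps, in order: (i) Fix $k$ and identify the differential $R(n,k) \xrightarrow{d_\Gamma} R(n+1,k)$ with left multiplication by $r_x(d-n-1)$. A cocycle in $R(n,k)$ is an element $u \in R(n,k)$ with $r_x(d-n-1)u = 0$, i.e. $u \in rann_R(r_x(d-n-1)) \cap R(n,k)$. A coboundary is an element of $r_x(d-n)R(n-1,k)$. (ii) Show that the cocycle condition $rann_R(r_x(d-n-1)) \cap R(n,k) = r_x(d-n)R(n-1,k)$ holds for \emph{all} $k$ simultaneously (for a given $n$ with $n>k$) if and only if the annihilator formula $rann_R(r_x(d-n-1)) = r_x(d-n)R \oplus H_x(d-n)$ holds for the index $m := d-n-1$; here one uses Lemma~\ref{rtideal} to split $rann_R(r_x(m))$ by rank of the leading generator — the part supported on $r_y$ with $rk(y) \ne d-m = n+1$ is automatically in $H_x(n+1) \cap R_+ \cdot$, i.e. contributes trivially to cohomology, while the part supported on $rk(y) = n+1$ lives in $\bigoplus_k R(n,k)$, and matching it with coboundaries $r_x(n+1-\text{something})\cdot(\cdots)$ recovers $r_x(d-n)R$. (iii) Use Lemma~\ref{easycohom} to dispose of the extreme cases $n=k$ (always $F$), $n=d$ (vanishing for $k<d$ is automatic), and $n=d-1$ (vanishing for $k<d-1$ is automatic, being the base-case Remark~\ref{basecase}); these are exactly the indices where the annihilator formula is unconditionally true, so the nontrivial content of the cohomology hypothesis matches the nontrivial content of Theorem~\ref{formula} for the vertex $x$. (iv) Conclude by Theorem~\ref{formula}.

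The main obstacle I anticipate is step (ii): carefully decoupling the annihilator formula "across $k$" into the individual cohomology statements, and conversely reassembling the cohomology vanishing for each fixed $k$ into the single graded identity $rann_R(r_x(m)) = r_x(m+1)R \oplus H_x(m+1)$. The subtlety is that $rann_R(r_x(m))$ is a right ideal whose generators can have mixed rank, whereas each $R(n,k)$ is pinned to a fixed rank ($n+1$) of leading generator and a fixed total degree ($n-k+1$); one must check that an arbitrary linear-plus-higher annihilator decomposes compatibly with the direct sum $R_+ = \bigoplus_y r_y R$ of Lemma~\ref{rtideal}, so that "annihilator $=$ expected right ideal" degree-by-degree and rank-by-rank is equivalent to "cocycles $=$ coboundaries" in every $R(\cdot,k)$. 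Once the bookkeeping of Lemma~\ref{rtideal} is set up cleanly — in particular the observation, used already in the proof of Theorem~\ref{formula}, that $r_x(n)$ kills $r_z$ for $z \notin S_x(n+1)$ and kills $r_x(n+1)$ — the equivalence should fall out, and the genuinely hard inequality (that cocycles are coboundaries, equivalently that the annihilator is no larger than expected) is precisely the content that Theorem~\ref{formula} identifies with Koszulity.
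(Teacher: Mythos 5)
Your proposal is correct and follows essentially the same route as the paper's proof: identify $d_\Gamma$ on $R(n,k)$ with left multiplication by $r_x(d-n-1)$, use Lemma~\ref{rtideal} to see $H_x(d-n)\cap R(n,k)=0$ so that cocycles/coboundaries match the annihilator formula of Theorem~\ref{formula} for the top vertex $x$, handle the extreme indices $n=k,\,d-1,\,d$ by Lemma~\ref{easycohom}, and delegate the formulas for all $r_y(n)$ with $y<x$ to the Koszulity of $R(\Gamma_y)$. The only difference is one of emphasis: you spell out the rank-and-degree bookkeeping needed to reassemble the fixed-$k$ cohomology statements into the full graded identity $rann_R(r_x(m))=r_x(m+1)R\oplus H_x(m+1)$, which the paper compresses into the remark that ``the calculations above can be reversed.''
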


\begin{proof} 
 Assume $R$ is Koszul.  By \ref{easycohom} we may fix $k$ and $n$ with $0\le k < n< d-1$.  Then $H^n(R(\cdot,k),d_\Gamma)$ is the cohomology of 
$$ R(n-1,k)\buildrel r_x(d-n)\over \longrightarrow R(n,k) 
\buildrel r_x(d-n-1)\over \longrightarrow R(n+1,k).$$
Since $\{y \in \Gamma' | rk(y) = n+1 \} = S_x(d-n)$, $H_x(d-n) \cap R(n,k) = 0$
and $r_x(d-n)R(n-1,k) \subset R(n,k)$.  Thus, since $R$ is Koszul,
we have by \ref{formula},
$rann_R(r_x(d-n-1)) \cap R(n,k) = r_x(d-n)R(n-1,k)$. This proves that
$H^n(R(\cdot,k),d_\Gamma)=0$.

Conversely, assume the cohomology groups $H^n(R(\cdot,k),d_\Gamma)$ are as required.
Then the calculations above can be reversed to get the formulas for 
$rann_R(r_x(d-n-1))$ required by \ref{formula}.  The formulas required for all other
$r_y(n)$ are assured by the hypothesis that $R(\Gamma_y)$ is 
Koszul.
\end{proof}

\begin{rmk}
As stated earlier, we have expressed 3.8 in terms that will be useful for induction.  But with
only slightly more work one can show that  Theorem \ref{R-cohomology} is true without the inductive hypotheses or
the hypothesis that $\Gamma=\Gamma_x$.  That is,
for any uniform $\Gamma$ of rank $d+1$, $R(\Gamma)$ is Koszul if and only if
$H^n(R(\cdot,k),d_\Gamma) = 0$ for all $0\le k<n\le d$.  
\end{rmk}

\section{Some cohomology groups associated to regular CW-complexes}\label{topology}

Let $X$ be a regular CW-complex of dimension $d$.   Given a $k$-cell $e_\alpha$ in $X$ we 
let $X_\alpha$ be the closed subcomplex of all cells $e_\beta$ contained in $D_\alpha$.  We also let 
$Y_\alpha$ be the closed subcomplex of all cells $e_\gamma$ such that $e_\alpha$ is not contained in $D_\gamma$.  In particular, $||X_\alpha|| = D_\alpha$, is a $k$-disk.  We note that $\partial e_\alpha \subset Y_\alpha$ and since $X$ is regular, $Y_\alpha$ is not empty unless $X$ is a single point.

We will assume that an orientation has been chosen
for every cell $e_\alpha$ in $X$.  Given an $n$-cell $e_\alpha$ and an $n-1$ cell $e_\beta$ contained
in $\partial e_\alpha$, we define
$d(\alpha,\beta)$ to be $+1$ (respectively $-1$) if the orientation on $e_\beta$ is the same
as (respectively the opposite of) the orientation inherited from $\partial e_\alpha$.  If the $n-1$ cell
$e_\beta$ is not contained in the boundary of $e_\alpha$ then we put $d(\alpha,\beta) = 0$.  Of course $d(\alpha,\beta)$ can also be described as the degree of the appropriate attaching map. 

Let $C^n(X)$ (resp. $C_n(X)$) be the free Abelian group on the symbols $v(\alpha)$ (resp. $w(\alpha)$) for all $n$-cells $e_\alpha$ of $X$.  Then the standard cellular cochain and chain complexes for $X$ are realized on the spaces
$C^n(X)$ and $C_n(X)$ respectively via the differentials:
$$\begin{array}{ccc}
d_X: C^n(X) \to C^{n+1}(X) & \hbox{given by} & 
d_X(v(\alpha)) = \sum_{\gamma }d(\gamma,\alpha)v(\gamma).
\end{array}$$
$$\begin{array}{ccc}
\hat d_X: C_n(X) \to C_{n-1}(X) & \hbox{given by} & 
\hat d_X(w(\alpha)) = \sum_{\gamma }d(\alpha,\gamma)w(\gamma).
\end{array}$$

Given a subcomplex $Y$ of $X$ we similarly write $C^n(X,Y)$ and $C_n(X,Y)$ for the relative cellular cochain and chain complexes of $X$ relative to $Y$ (with the same notation for their differentials $d_X$ and $\hat d_X$).

For $0\le k\le n \le d$, let $C_X(n,k)$ be the free Abelian group on symbols 
$v(\alpha,\beta)$ for all those pairs $(\alpha,\beta)$ such that $e_\alpha$ is an $n$-cell, $e_\beta$ is a $k$-cell and $\beta<\alpha$, i.e.
$e_\beta \subset D_\alpha$.  If $e_\beta$ is not contained in $D_\alpha$,  it is convenient to let the symbol $v(\alpha,\beta)$ be $0$ in $C_X(n,k)$.   We define linear maps $d_X$ and $\hat d_X$ as follows:
$$\begin{array}{ccc}
d_X: C_X(n,k) \to C_X(n+1,k) & \hbox{via} & 
d_X(v(\alpha,\beta)) = \sum_{\gamma }d(\gamma,\alpha)v(\gamma,\beta)
\end{array}$$
$$\begin{array}{ccc}
\hat d_X: C_X(n,k) \to C_X(n,k-1) & \hbox{via} &
\hat  d_X(v(\alpha,\beta)) = \sum_{\gamma }d(\beta,\gamma)v(\alpha,\gamma)
\end{array}$$
It is clear that $d_X\hat d_X = \hat d_X d_X$ since $d_X$ and $\hat d_X$ are transposes of one another. 

For a fixed $k$-cell $e_\alpha$, let $C_X(n,k)_\alpha$ be the subgroup of $C_X(n,k)$ generated by the symbols $v(\beta,\alpha)$ as $\beta$ varies over all $n$-cells for which 
$e_\alpha\subset D_\beta$.  Then
$C_X(n,k) = \oplus_\alpha C_X(n,k)_\alpha$ and
$d_X$ maps $C_X(n,k)_\alpha$ into $C_X(n+1,k)_\alpha$.  But we can identify 
$(C_X(n,k)_\alpha,d_X)$ with $(C^n(X,Y_\alpha),d_X)$ via 
$v(\beta,\alpha) \mapsto v(\beta)$.  We conclude that $(C_X(n,k),d_X)$ is a cochain complex and 
$$H^n(C_X(\cdot,k)_\alpha,d_X) = H^n(X,Y_\alpha).$$

Similarly, for a fixed $n$-cell $\beta$ let $C_X(n,k)^\beta$ be the subroup spanned by
$v(\beta,\alpha)$ as $\alpha$ varies over the $k$-cells contained in $D_\beta$.  
Then we can
identify $(C_X(n,k)^\beta,\hat d_X)$ with $(C_k(X_\beta),\hat d_X)$ via $v(\beta,\alpha) \mapsto w(\alpha)$. We conclude
$$H_k(C_X(n,\cdot)^\beta,\hat d_X) = H_k(X_\beta).$$

\begin{defn}\label{LandH} 
(1)  For all $0\le k,n\le d$,
$$L_X(n,k) := coker(C_X(n,k+1) \buildrel \hat d_X\over \to C_X(n,k))$$ 
with cochain differential $d_X:L_X(n,k) \to L_x(n+1,k)$ induced from the differential $d_X$ on $C_X(n,k)$.  
 
(2)  For all $0\le k,n\le d$, $H_X(n,k) = H^n(L_X(\cdot,k),d_X)$.
\end{defn}

We will need some tools to analyze the cohomology groups $H_X(n,k)$.  We note immediately that $L_X(n,k) = 0$ and $H_X(n,k) =0$ whenever $n<k$.  There are three simple lemmas which often suffice to calculate $H_X(n,k)$.

\begin{lemma}\label{shortex} For all $k\ge 1$ we have a short exact sequence of cochain:
$$0\to L_X(n,k) \to C_X(n,k-1) \to L_X(n,k-1) \to 0.$$  
\end{lemma}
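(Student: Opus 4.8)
The plan is to construct the short exact sequence degree-by-degree on the underlying abelian groups and then check that all maps commute with the cochain differential $d_X$. Recall that for fixed $n$ and $k$ the operator $\hat d_X$ runs $C_X(n,k+1)\to C_X(n,k)\to C_X(n,k-1)$, and since $\hat d_X^2=0$ this is a chain complex in the second variable for each fixed first variable. In fact, unravelling the identification $(C_X(n,k)^\beta,\hat d_X)\cong(C_k(X_\beta),\hat d_X)$, this complex is just the direct sum over all $n$-cells $\beta$ of the cellular chain complex of the disk $X_\beta$. Each such $X_\beta$ is a closed ball, hence acyclic, so the only homology sits in degree $0$; this is the structural fact that makes the sequence short exact rather than merely a four-term complex.

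First I would define the map $L_X(n,k)\to C_X(n,k-1)$. By definition $L_X(n,k)=\mathrm{coker}(C_X(n,k+1)\xrightarrow{\hat d_X}C_X(n,k))$; since $\hat d_X\hat d_X=0$, the composite $C_X(n,k+1)\to C_X(n,k)\xrightarrow{\hat d_X}C_X(n,k-1)$ is zero, so $\hat d_X\colon C_X(n,k)\to C_X(n,k-1)$ factors through the quotient $L_X(n,k)$, giving the desired map. Then $L_X(n,k-1)=\mathrm{coker}(C_X(n,k)\xrightarrow{\hat d_X}C_X(n,k-1))$ is by construction the cokernel of that factored map, so the sequence $L_X(n,k)\to C_X(n,k-1)\to L_X(n,k-1)\to 0$ is exact at the last two spots essentially by definition. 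The content is exactness on the left, i.e. injectivity of $L_X(n,k)\to C_X(n,k-1)$, which is equivalent to the statement that $\ker(\hat d_X\colon C_X(n,k)\to C_X(n,k-1))=\mathrm{im}(\hat d_X\colon C_X(n,k+1)\to C_X(n,k))$ — exactly homology vanishing in degree $k$ of $\bigoplus_\beta C_*(X_\beta)$ when $k\ge 1$. Since each $X_\beta$ is a ball, $H_k(X_\beta)=0$ for $k\ge 1$, which gives this. (The hypothesis $k\ge 1$ is precisely what is needed: in degree $0$ the homology $H_0(X_\beta)=\Z$ is nonzero.)

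Finally I would verify that each map is a map of cochain complexes with respect to $d_X$ (the differential in the $n$-variable). The differential $d_X$ on $L_X(n,k)$ is by definition induced from $d_X$ on $C_X(n,k)$, which is legitimate because $d_X$ commutes with $\hat d_X$ (they are transposes of one another, as noted before the definition of $L_X$), hence $d_X$ preserves $\mathrm{im}(\hat d_X)$; the same remark makes $C_X(n,k-1)\to L_X(n,k-1)$ a cochain map, and the map $L_X(n,k)\to C_X(n,k-1)$ is induced by $\hat d_X$, which again commutes with $d_X$. So all three arrows are morphisms of cochain complexes, and we obtain the asserted short exact sequence of cochain complexes. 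The main obstacle is really just the bookkeeping identifying the $\hat d_X$-complex in the second variable with $\bigoplus_\beta C_*(X_\beta)$ and invoking contractibility of the balls $X_\beta$; everything else is formal nonsense about cokernels and commuting differentials.
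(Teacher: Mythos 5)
Your proposal is correct and follows essentially the same route as the paper: the paper's proof likewise identifies the $\hat d_X$-homology of $C_X(n,\cdot)$ with $\bigoplus_\beta H_*(X_\beta)$ over the $n$-cells $\beta$, notes these discs are contractible so homology is concentrated in degree $0$, and concludes exactness. Your write-up merely makes explicit the formal cokernel bookkeeping and the compatibility with $d_X$ that the paper leaves as ``the result follows immediately.''
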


\begin{proof}
As shown above, the homology of $\hat d_X$ on $C_X(n,\cdot)$ is the direct sum of the homology groups of the spaces $X_\beta$ as $\beta$ varies over the $n$-cells.  These are all contractible, hence $\hat d_X$ has homology only in degree $k=0$.  The result follows immediately. 
\end{proof}

We can always calculate the groups $H_X(n,k)$ for $k=0$.

\begin{lemma}\label{topology1}
$H_X(n,0) = H^n(X)$.  
\end{lemma}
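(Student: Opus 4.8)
The plan is to compute $L_X(\cdot,0)$ directly, using the column decomposition of the groups $C_X(n,k)$ recorded just before Definition \ref{LandH}. Recall that $C_X(n,k)=\bigoplus_\beta C_X(n,k)^\beta$, the sum over all $n$-cells $\beta$, and that $(C_X(n,\cdot)^\beta,\hat d_X)\cong(C_\cdot(X_\beta),\hat d_X)$ via $v(\beta,\alpha)\mapsto w(\alpha)$. Since $||X_\beta||=D_\beta$ is a disk, $X_\beta$ is nonempty and path connected, so $H_0(X_\beta)=\Z$. Also, since $d_X\hat d_X=\hat d_X d_X$, the cochain differential $d_X$ on $C_X(\cdot,0)$ descends to $L_X(\cdot,0)$, the cochain complex whose cohomology computes $H_X(n,0)$.

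The first step is to identify the cochain group $L_X(n,0)$ with $C^n(X)$. By definition $L_X(n,0)=\hbox{coker}\bigl(C_X(n,1)\buildrel\hat d_X\over\to C_X(n,0)\bigr)$, and this cokernel decomposes along columns as $\bigoplus_\beta\hbox{coker}\bigl(C_1(X_\beta)\buildrel\hat d_X\over\to C_0(X_\beta)\bigr)=\bigoplus_\beta H_0(X_\beta)$: a free abelian group with one generator $[\beta]$ for each $n$-cell $\beta$. Because $X_\beta$ is path connected, $[\beta]$ is the common image of the symbols $v(\beta,p)$ as $p$ ranges over the vertices of $e_\beta$, so the assignment $[\beta]\mapsto v(\beta)$ is a well-defined isomorphism $L_X(n,0)\cong C^n(X)$.

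The second step is to check that this isomorphism intertwines the induced differential on $L_X(\cdot,0)$ with the cellular coboundary $d_X$ on $C^\cdot(X)$. Representing $[\beta]$ by $v(\beta,p)$ with $p$ a vertex of $e_\beta$, we have in $C_X(n+1,0)$ the equality $d_X\bigl(v(\beta,p)\bigr)=\sum_\gamma d(\gamma,\beta)\,v(\gamma,p)$. Every term with $d(\gamma,\beta)\ne 0$ satisfies $\beta<\gamma$, hence $p<\gamma$, so $v(\gamma,p)\ne 0$ and represents $[\gamma]$ in $L_X(n+1,0)$. Therefore $d_X[\beta]=\sum_\gamma d(\gamma,\beta)[\gamma]$, which is precisely $d_X v(\beta)$ under the identification. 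Passing to cohomology in $n$ gives $H_X(n,0)=H^n(L_X(\cdot,0),d_X)=H^n(C^\cdot(X),d_X)=H^n(X)$.

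This argument is essentially bookkeeping; no serious obstacle is expected. The only points needing care are the well-definedness of $[\beta]\mapsto v(\beta)$ — that is, that the $\beta$-column cokernel of $\hat d_X$ is genuinely $H_0$ of a connected, nonempty complex (here regularity is used: when $\dim e_\beta\ge 1$ the boundary sphere of $e_\beta$ supplies the needed vertices) — and the transitivity fact $p<\beta<\gamma\Rightarrow p<\gamma$ invoked in the differential computation, which ensures no coboundary term is accidentally zero in the cokernel. One could instead derive the group-level identification from the short exact sequence of Lemma \ref{shortex} with $k=1$, but the direct computation above is shorter.
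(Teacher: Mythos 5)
Your proof is correct and is essentially the paper's argument: both establish a cochain isomorphism $L_X(n,0)\cong C^n(X)$ and then pass to cohomology. The only difference is packaging --- you get the group-level identification by observing that the $\beta$-column of the cokernel of $\hat d_X$ is $H_0(X_\beta)=\Z$ for the connected complex $X_\beta$, whereas the paper re-derives this fact by exhibiting an explicit edge path in the $1$-skeleton of $X_\beta$ joining two vertices; you also spell out the intertwining of the differentials, which the paper leaves implicit.
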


\begin{proof}
 We have a well-defined surjective cochain map $f:C_X(n,0) \to C^n(X)$ given by 
$f(v(\alpha,\beta)) = v(\alpha)$. The kernel of this map is generated by the elements of the form
$v(\alpha,\beta) - v(\alpha,\beta')$ for any $n$-cell $e_\alpha$ and any two distinct 
0-cells $e_\beta$ and $e_\beta'$ contained in $X_\alpha$.    Since $X_\alpha$ is connected, its 1-skeleton $X_1$ is also connected and thus we can choose a sequence of 1-cells  $\gamma_1, \gamma_2, \ldots,\gamma_s$  in $X_\alpha$ which form a path
from $\beta$ to $\beta'$.  Let $\partial e_{\gamma_i} = \{e_{\beta_i},e_{\beta_{i+1}}\}$ with 
$\beta_1=\beta$ and $\beta_{r+1} =\beta'$. Then
$v(\alpha,\beta) - v(\alpha,\beta') = \sum_i v(\alpha,\beta_i) - v(\alpha,\beta_{i+1})$.
But $v(\alpha,\beta_i) - v(\alpha,\beta_{i+1}) = \pm \hat d_X(v(\alpha,\gamma_i))$.  
This shows that the kernel of $f:C_X(n,0) \to C^n(X)$ is the image of 
$\hat d_X:C_X(n,1) \to C_X(n,0)$.  We conclude that the induced map 
$L_X(n,0) \to C^n(X)$ is a cochain isomorphism, which proves the lemma.
\end{proof} 

\begin{rmk} All of the groups $C_X(n,k)$, $L_X(n,k)$ and $H_X(n,k)$ are defined with integral coefficients.  We could as easily give all of the same definitions with coefficients in an arbitrary 
field $F$ and we will denote the corresponding groups as $C_X(n,k;F)$, $L_X(n,k;F)$ and $H_X(n,k;F)$ respectively.  By Lemma \ref{topology1} we have $H_X(n,0;F) = H^n(X;F)$. 
\end{rmk}

Let $\tilde H$ denote
reduced cohomology (or homology).  Our last topological lemma deals with the nicest case, when the space $X$ is a manifold or a manifold with boundary.

\begin{lemma}\label{topology2}
Let $X$ be a connected regular CW-complex of dimension $d$ which is either 
a manifold or a manifold with boundary.  Then:

(1) For any $k$-cell $e_\alpha$,  $H^n(X,Y_\alpha) = 0$ for all $n<d$.

(2) Suppose further that $\tilde H^n(X;F) =0$ for $n<d$.  Then
for all $0\le k< n< d$, $H_X(n,k;F) = 0$ and for $0\le k < d$, 
$H_X(k,k;F) = F$.

(3) If $X = X_\beta$ then  $H_X(d,k;F) = 0$ for $k <d$.

\end{lemma}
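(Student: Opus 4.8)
Part~(1) is the only genuinely topological statement; parts~(2) and~(3) will follow from it by homological bookkeeping with the short exact sequence of Lemma~\ref{shortex} together with Lemma~\ref{topology1}. To prove~(1) I would identify $H^n(X,Y_\alpha)$ with the local cohomology of $\|X\|$ at a point $x$ of the open cell $e_\alpha$. The complement $\|X\|\setminus\|Y_\alpha\|$ is precisely the open star $U$ of $e_\alpha$, an open neighbourhood of $x$; for a regular CW complex, $U$ deformation retracts onto $e_\alpha$ and $\|X\|\setminus e_\alpha$ deformation retracts onto $\|Y_\alpha\|$ (standard regular--neighbourhood facts; cf.~\cite{Bjorner1}). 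Combining these retractions with excision---after the usual replacement of $\|Y_\alpha\|$ by an open neighbourhood deformation retracting onto it---gives $H^n(X,Y_\alpha)\cong H^n(\|X\|,\|X\|\setminus\{x\})$. Since $\|X\|$ is a $d$-manifold with boundary and $x$ lies in a single open cell, $x$ is either an interior point of $\|X\|$, in which case this group is $F$ for $n=d$ and $0$ otherwise, or a boundary point, in which case it is $0$ for every $n$ (model pair $(\RR^d_+,\RR^d_+\setminus\{0\})$); either way it vanishes for $n<d$. I expect this reduction to local cohomology to be the main obstacle: one must justify that every open cell of a regular CW manifold lies wholly in the interior or wholly on the boundary, and that the open star is a genuine regular neighbourhood, so that the excision argument is legitimate.

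For part~(2) I would use that $H^n(C_X(\cdot,k),d_X)=\bigoplus_\alpha H^n(X,Y_\alpha)$, the sum over $k$-cells $\alpha$, which by~(1) vanishes for all $n<d$ and all $k$. Feeding this into the long exact cohomology sequence of the short exact sequence of cochain complexes
$$0\to L_X(\cdot,k)\to C_X(\cdot,k-1)\to L_X(\cdot,k-1)\to 0$$
of Lemma~\ref{shortex} (valid for $k\ge1$): once $n\le d-1$, both $C_X(\cdot,k-1)$-terms neighbouring the segment $H_X(n-1,k-1)\to H_X(n,k)$ vanish, giving $H_X(n,k;F)\cong H_X(n-1,k-1;F)$ for all $1\le k\le n\le d-1$. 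Iterating $k$ times lands on $H_X(n-k,0;F)=H^{n-k}(X;F)$ by Lemma~\ref{topology1}. For $k<n$ one has $1\le n-k<d$, so $H^{n-k}(X;F)=\tilde H^{n-k}(X;F)=0$ by hypothesis; for $k=n$ it is $H^0(X;F)=F$ by connectedness. That is part~(2).

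For part~(3), the hypothesis $X=X_\beta$ forces $\|X\|=D_\beta$ to be a $d$-disk---connected, contractible, and a manifold with boundary---so part~(2) applies and computes all the groups $H_X(n,k;F)$ with $n<d$. The extra ingredient is that every cell of $X$ other than $\beta$ lies in $\partial e_\beta$, a regular CW $(d-1)$-sphere, so for such a cell $\alpha$ the subcomplex $Y_\alpha$ is $\partial e_\beta$ with an open star deleted, hence contractible; since $\|X\|$ is contractible too, the pair $(X,Y_\alpha)$ has vanishing cohomology in every degree. Therefore $H^{d-1}(C_X(\cdot,k-1);F)=H^d(C_X(\cdot,k-1);F)=0$ whenever $k-1<d$, and the long exact sequence of Lemma~\ref{shortex} at $n=d$ collapses to $H_X(d,k;F)\cong H_X(d-1,k-1;F)$ for $1\le k\le d$. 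By part~(2) the right-hand side is $H^{d-k}(D^d;F)$, which is $0$ for $k<d$; and $H_X(d,0;F)=H^d(D^d;F)=0$. Hence $H_X(d,k;F)=0$ for all $k<d$.
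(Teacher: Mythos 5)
Your proposal is correct and follows essentially the same route as the paper: part (1) by deformation-retracting $X\setminus e_\alpha$ onto $Y_\alpha$ and reducing to the local cohomology $H^n(X,X\setminus\{x\})$ of a manifold (zero for $n<d$ at interior points, zero in all degrees at boundary points), and parts (2) and (3) by feeding the vanishing of $H^n(C_X(\cdot,k),d_X)=\bigoplus_\alpha H^n(X,Y_\alpha)$ into the long exact sequence of Lemma \ref{shortex} to shift $H_X(n,k)$ down to $H_X(n-k,0)=H^{n-k}(X)$. Your version is somewhat more explicit than the paper's (notably in justifying part (3) and in flagging that each open cell lies wholly in the interior or wholly in the boundary), but the underlying argument is identical.
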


\begin{proof}
Part (1).  For any $k$-cell $e_\alpha$ and any $n$-cell $e_\beta$ for which
$e_\alpha \subset D_\beta$, the space $\partial D_\beta \setminus e_\alpha$ is a deformation retract of
$D_\beta \setminus e_\alpha$.  These deformation retracts can be glued together to see that $Y_\alpha$ is a deformation retract of $X\setminus e_\alpha$.
If $e_\alpha$ is contained in the interior of $X$ then we get 
$H^n(X,Y_\alpha) = H^n(X,X\setminus e_\alpha) = \tilde H^n(S^d)$.  This is $0$ for
$n<d$.  If $e_\alpha$ is contained in the boundary of $X$, then
$H^n(X,Y_\alpha) = H^n(X,X\setminus e_\alpha) = 0$ for all $n$.  

Part (2).  To ease the exposition, we suppress $F$ from the notation.  
We have $C_X(n,k) = \oplus_\alpha C_X(n,k)_\alpha$.  For each
$\alpha$, $H^n(C(\cdot,k)_\alpha,d_X) = H^n(X,Y_\alpha)$.  Thus by (1) we have
$H^n(C_X(\cdot,k),d_X) = 0$ for all $0\le k\le n<d$.   We can now prove (2) by induction
on $k$.  For $k=0$ we get the result from the hypothesis on the cohomology of $X$ together with \ref{topology1}.  Assume the result for $k-1$ and consider the long exact sequence from \ref{shortex}.
The result follows immediately.

Part (3).  If $X=X_\beta$ is a $d$-disc, then $H^n(X) = H_X(n,0) = 0$ for all $0<n\le d$. Moreover, $H^d(X,Y_\alpha) = 0$ for all $\alpha$.  Hence the inductive proof used in (2) works when $n=d$. 
\end{proof}

\begin{rmk}\label{disc}
In the Lemma above we can consider the case when $X = X_\beta$ for some $d$-cell $e_\beta$, i.e.
$||X||$ is the manifold with boundary, $D_\beta^d$.  Then the lemma is true with
$F$ replaced by $\ZZ$ and furthermore $H_X(k,k) = \ZZ$ for all $0\le k\le d$.   
\end{rmk}

\section{Proofs of the main theorems.}

Throughout this section $X$ will denote a regular CW complex. We fix a field $F$. First 
we will consider the $F$-algebras $R(\bar P(X))$.  Let $\Gamma = \bar P(X)$.  
Since $\Gamma$ will vary, we write
$R_\Gamma(n,k)$ for the spaces $R(n,k)$ relative to $R(\Gamma)$, as defined in section 3.  

Recall that for any $k$-cell $e_\alpha^k$,  $X_\alpha$ is the subcomplex of $X$ whose total space is the disc  $||X_\alpha|| = D_\alpha^k$.  In particular we have 
$\Gamma_\alpha = \bar P(X)_\alpha = \bar P(X_\alpha) = \hat P(\partial X_\alpha)$. 

Choose two cells $\alpha< \beta$ in $X$ of dimensions $k$ and $n$ respectively and choose a path $\pi = \{\alpha = \alpha_0<\alpha_1<\ldots <\alpha_s = \beta\}$ in $\Pi(\beta, \alpha)$.  We set:
$$ r_\pi = r_{\alpha_s} r_{\alpha_{s-1}} \cdots r_{\alpha_0} \in R_\Gamma(n,k).$$
(Note: $s=n-k$.) We also define the {\it sign} of $\pi$ by the formula:
$$sgn(\pi) = d(\alpha_s,\alpha_{s-1})d(\alpha_{s-1},\alpha_{s-2}) \cdots d(\alpha_1,\alpha_0).$$

\begin{lemma}\label{pathindep}  Let $\alpha$ and $\beta$ be cells with
$\alpha <\beta$.  For any two paths $\pi$ and $\pi'$ in $\Pi(\beta,\alpha)$,
$sgn(\pi)r_{\pi} = sgn(\pi')r_{\pi'}$.

\end{lemma}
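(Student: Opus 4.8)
The plan is to reduce the statement to Lemma \ref{diamond}, which says that $\Pi(\beta,\alpha)$ is a single $\diamond$-equivalence class, so it suffices to check the identity $sgn(\pi)r_\pi = sgn(\pi')r_{\pi'}$ for two paths $\pi,\pi'$ that are \emph{directly} diamond-equivalent, i.e. agree in every coordinate except one, say at position $j$: $\pi = \{\alpha_0<\dots<\alpha_{j-1}<\alpha_j<\alpha_{j+1}<\dots<\alpha_s\}$ and $\pi' = \{\alpha_0<\dots<\alpha_{j-1}<\alpha_j'<\alpha_{j+1}<\dots<\alpha_s\}$ with $\alpha_j\ne\alpha_j'$. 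By transitivity (both $r_\pi$ and $sgn(\pi)$ are built multiplicatively along the path, and a common scalar cancels through $F$ being a field) the general case follows by chaining such elementary moves. I would first record this reduction explicitly.

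Next, since $r_\pi$ and $r_{\pi'}$ agree in all the ``outer'' factors $r_{\alpha_s}\cdots r_{\alpha_{j+1}}$ on the left and $r_{\alpha_{j-1}}\cdots r_{\alpha_0}$ on the right, and since $sgn(\pi)/sgn(\pi')$ only involves the factors $d(\alpha_{j+1},\alpha_j)d(\alpha_j,\alpha_{j-1})$ versus $d(\alpha_{j+1},\alpha_j')d(\alpha_j',\alpha_{j-1})$, the whole claim localizes to the rank-$2$ interval $[\alpha_{j-1},\alpha_{j+1}]$. By thinness of $\bar P(X)$ (Lemma, part (2)), this interval has exactly the four elements $\alpha_{j-1}<\alpha_j,\alpha_j'<\alpha_{j+1}$, so $S_{\alpha_{j+1}}(1)$ at the top contains exactly $\alpha_j$ and $\alpha_j'$ among the elements below $\alpha_{j+1}$ meeting $\alpha_{j-1}$. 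The key computation is then the following local identity in $R(\Gamma)$: for a thin interval of rank $2$,
$$ d(\alpha_{j+1},\alpha_j)\, r_{\alpha_j}r_{\alpha_{j-1}} \;=\; -\,d(\alpha_{j+1},\alpha_j')\, r_{\alpha_j'}r_{\alpha_{j-1}} \quad\text{modulo left-multiplication by }r_{\alpha_{j+1}}. $$
To see this I would use the defining relations of $R(\Gamma) = grA(\Gamma)^!$: the relation $r_{\alpha_{j+1}} \otimes r_{\alpha_{j+1}}(1) = 0$ forces $r_{\alpha_{j+1}}(r_{\alpha_j} + r_{\alpha_j'}) = 0$ (the other summands of $r_{\alpha_{j+1}}(1)$ die against $r_{\alpha_{j-1}}$ on the right since they are not in $S_{\alpha_{j+1}}(1)$ incident to $\alpha_{j-1}$, using Lemma \ref{rtideal}), while the relation $r_{\alpha_j}\otimes r_{\alpha_j}(1) = 0$ gives $r_{\alpha_j}r_{\alpha_{j-1}}$ depends only on $\alpha_j$, not on how we arrived. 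Combining, $r_{\alpha_{j+1}}r_{\alpha_j}r_{\alpha_{j-1}} = - r_{\alpha_{j+1}}r_{\alpha_j'}r_{\alpha_{j-1}}$; multiplying on the left by the common outer string $r_{\alpha_s}\cdots r_{\alpha_{j+2}}$ and on the right by $r_{\alpha_{j-2}}\cdots r_{\alpha_0}$ yields $r_\pi = -\,r_{\pi'}$. It remains to match this sign $-1$ with the sign coming from $sgn$: since $X$ is regular and $[\alpha_{j-1},\alpha_{j+1}]$ is a thin diamond, the standard fact that $\hat d_X^2 = 0$ on this interval (the boundary of the boundary of a cell vanishes) gives precisely $d(\alpha_{j+1},\alpha_j)d(\alpha_j,\alpha_{j-1}) + d(\alpha_{j+1},\alpha_j')d(\alpha_j',\alpha_{j-1}) = 0$, i.e. $sgn(\pi) = -\,sgn(\pi')$ for the local two-step segments, hence $sgn(\pi)/sgn(\pi') = -1$ globally. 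Therefore $sgn(\pi)r_\pi = sgn(\pi')r_{\pi'}$, and transitivity finishes the proof.

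The main obstacle I anticipate is bookkeeping the sign identity cleanly: one must be careful that the ``other'' terms in $r_{\alpha_{j+1}}(1)$ and $r_{\alpha_j}(1)$ genuinely vanish when placed in the product $r_{\alpha_s}\cdots r_{\alpha_{j+2}}\,(-)\,r_{\alpha_{j-2}}\cdots r_{\alpha_0}$, which is exactly what Lemma \ref{rtideal} and the structure of the relations $I_2(\Gamma)$ (namely $r_x\otimes r_y = 0$ for $y\notin S_x(1)$) are for; and one must verify that the orientation-degree relation $\sum_\gamma d(\alpha_{j+1},\gamma)d(\gamma,\alpha_{j-1}) = 0$ for a thin diamond really does reduce to the two-term cancellation, which is the cellular-chain-complex identity $\hat d_X\hat d_X = 0$ restricted to $C_\bullet(X_{\alpha_{j+1}})$. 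Once those two vanishing facts are in hand the rest is formal.
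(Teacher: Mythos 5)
Your proposal is correct and follows essentially the same route as the paper: reduce via Lemma \ref{diamond} to two paths differing in a single position, localize to the thin rank-$2$ interval, use the relation $r_{\alpha_{j+1}}r_{\alpha_{j+1}}(1)=0$ together with $r_{\gamma}r_{\alpha_{j-1}}=0$ for $\gamma$ not above $\alpha_{j-1}$ to get $r_{\alpha_{j+1}}r_{\alpha_j}r_{\alpha_{j-1}}=-r_{\alpha_{j+1}}r_{\alpha_j'}r_{\alpha_{j-1}}$, and match the sign with the two-term cancellation $d(\alpha_{j+1},\alpha_j)d(\alpha_j,\alpha_{j-1})+d(\alpha_{j+1},\alpha_j')d(\alpha_j',\alpha_{j-1})=0$ coming from $d_X^2=0$. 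The only cosmetic difference is that you phrase the reduction via Lemma \ref{diamond} first and then do the rank-$2$ computation, whereas the paper does the rank-$2$ case first; the content is identical.
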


\begin{proof}
Suppose first that $dim(\beta) - dim(\alpha) = 2$. Then, since $\bar P(X)$ is thin, there are exactly two elements in $(\alpha,\beta)$, say $\gamma_1$ and $\gamma_2$.
We know $d(\beta,\gamma_1)d(\gamma_1,\alpha) + d(\beta,\gamma_2)d(\gamma_2,\alpha) = 0$.  This
just encodes the fact that $d_X^2 = 0$.  Let $S_\beta(1) = \{r_{\gamma_1}, 
r_{\gamma_2},\ldots, r_{\gamma_t}\}$.  Then for $j>2$, $\alpha \not < \gamma_j$ and
thus $r_{\gamma_j}r_\alpha = 0$.
Therefore, $0 = r_\beta r_\beta(1) r_\alpha = r_\beta r_{\gamma_1}r_\alpha
+ r_\beta r_{\gamma_2}r_\alpha$.  These formulas combine to show that
$sgn(\pi)r_\pi$ does not depend on which of the two paths in $\Pi(\beta,\alpha)$ we choose.

In general, 
let $\pi= \{\alpha = \alpha_0<\alpha_1<\ldots <\alpha_s = \beta\}$ and let
$\pi' = \{\alpha = \gamma_0<\gamma_1<\ldots <\gamma_s = \beta\}$.
By \ref{diamond}, we may assume that there is a unique $i$, $1\le i \le s-1$
such that $\alpha_i \ne \gamma_i$.  We may reduce then to the case
when $\alpha = \alpha_{i-1}$ and $\beta = \alpha_{i+1}$, which is covered by the previous paragraph
\end{proof}

Note:  we do not claim that $sgn(\pi) r_\pi \ne 0$.

In light of Lemma \ref{pathindep}, we have a well-defined linear map
$\Phi_X: C_X(n,k;F) \to R_\Gamma(n,k)$ given by extending linearly from:
$$\Phi_X(v(\beta,\alpha)) = sgn(\pi) r_\pi \ \hbox{ for any }\ \pi \in \Pi(\beta,\alpha).$$
This map is clearly surjective for all $n$ and $k$.  

Suppose now that $\beta$ is an $n$-cell, $\alpha$ is a $(k+1)$-cell and let
$S_\alpha(1) = \{\gamma_1,\ldots,\gamma_m\}$.  Choose $\pi\in \Pi(\beta,\alpha)$.
Then $\pi \cup \{\gamma_i\} \in \Pi(\beta,\gamma_i)$ for each $i$.  We calculate:
$$\begin{array}{ll} \Phi_X(\hat d_X(v(\beta,\alpha)))
& = \sum_i d(\alpha,\gamma_i) \Phi_X(v(\beta,\gamma_i)) =
   \sum_i d(\alpha,\gamma_i) sgn(\pi \cup \{\gamma_i\} )r_\pi r_{\gamma_i}\\ [6 pt]
&= \sum_i sgn(\pi)r_\pi r_{\gamma_i} = sgn(\pi)r_\pi r_\alpha(1) = 0.
\end{array}$$
We conclude that $\Phi_X$ factors through $coker(\hat d_X)$, to give us a well-defined
linear surjection $\Phi_X:L_X(n,k;F) \to R_\Gamma(n,k)$.

Fix an $n$-cell $\beta$ and let 
$\{\zeta_1,\ldots, \zeta_m\}$ be a list of all the $(n+1)$-cells $\zeta$ for which
$\beta<\zeta$.  For any $k$-cell 
$\alpha <\beta$, choose a path $\pi\in \Pi(\beta,\alpha)$. Again,
$\pi\cup \{\zeta_i\} \in \Pi(\zeta_i,\alpha)$.  We get:
$$\begin{array}{ll}
\Phi_X(d_X(v(\beta,\alpha))) 
&= \Phi_X( \sum_i d(\zeta_i,\beta)v(\zeta_i,\alpha))\\[6 pt]
&   = \sum_i d(\zeta_i,\beta)sgn(\pi \cup \{\zeta_i\})r_{\zeta_i} r_\pi\\[6 pt]
&= (\sum_i r_{\zeta_i})sgn(\pi)r_\pi\\[6 pt]
& = d_\Gamma \Phi_X(v(\beta,\alpha)).
\end{array}$$
Hence $\Phi_X: L_X(n,k) \to R_\Gamma(n,k)$ is a cochain epimorphism. 

\begin{thm}\label{easy}
Let $X$ be a regular CW complex and $\Gamma = \bar P(X)$.  Then:

(1)  $R(\Gamma)$ is a Koszul algebra.

(2)  $\Phi_X:L_X(n,k;F) \to R_\Gamma(n,k)$ is a cochain isomorphism (i.e. $\Phi_X$ is injective
for all $0\le k\le n\le dim(X)$).

\end{thm}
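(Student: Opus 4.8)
The plan is to prove (1) and (2) simultaneously by induction on $\dim(X)$, exploiting the already-constructed cochain epimorphism $\Phi_X: L_X(\cdot,k;F) \to R_\Gamma(\cdot,k)$ together with the inductive criterion of Theorem \ref{R-cohomology} and Corollary \ref{cyclic}. The base case $\dim(X)=0$ is immediate: $\bar P(X)$ is then a rank-one graph and $R(\Gamma)$ is clearly Koszul, while both $L_X(n,k)$ and $R_\Gamma(n,k)$ vanish except for $n=k=0$, where $\Phi_X$ is an isomorphism $F \to F$.

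For the inductive step, first reduce to the interval case. By Corollary \ref{cyclic}, $R(\bar P(X))$ is Koszul if and only if $R(\bar P(X)_\beta)$ is Koszul for every cell $\beta$, and $\bar P(X)_\beta = \bar P(X_\beta)$ where $||X_\beta|| = D_\beta^k$ is a disc of dimension $k = \dim(\beta)$. So fix a $d$-cell $\beta$, set $\Gamma = \bar P(X_\beta) = \Gamma_\beta$, a uniform graph of rank $d+1$, and note that every proper interval $\Gamma_y$ with $y \in (\bar 0,\beta)$ is $\bar P(X_y)$ for a cell $y$ of dimension $< d$; by the inductive hypothesis these $R(\Gamma_y)$ are all Koszul and the corresponding $\Phi_{X_y}$ are isomorphisms. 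Now apply Theorem \ref{R-cohomology}: $R(\Gamma)$ is Koszul if and only if $H^n(R_\Gamma(\cdot,k),d_\Gamma) = 0$ for $n \neq k$ and $= F$ for $n=k$, for all $0 \le k \le d$.

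The heart of the argument is to identify these cohomology groups topologically using $\Phi_X$. I want to show that on the interval $X_\beta$, the cochain surjection $\Phi_X: L_X(\cdot,k;F) \to R_\Gamma(\cdot,k)$ is in fact an isomorphism. The strategy: the domain complex $L_X(\cdot,k;F)$ has known cohomology by the tools of Section 4 — namely $X_\beta$ is a disc (a manifold with boundary with vanishing reduced cohomology), so by Lemma \ref{topology2}(2) and Remark \ref{disc}, $H_X(n,k;F) = 0$ for $k < n \le d$ and $H_X(k,k;F) = F$; the target complex $R_\Gamma(\cdot,k)$ has cohomology pinned down in the relevant degrees by Lemma \ref{easycohom} (top two spots vanish for $k$ small, and $H^k = F$). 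A degree-by-degree comparison, running downward from $n = d$, should force $\Phi_X$ to be injective: a surjection of finite-dimensional cochain complexes that is an isomorphism on cohomology in every degree and an isomorphism in the top degree must be an isomorphism throughout — more carefully, I would argue that $\ker\Phi_X$ is an acyclic subcomplex which, being a subcomplex of a bounded-below complex with the right vanishing at the top, must be zero. Once $\Phi_X$ is an isomorphism, the cohomology of $R_\Gamma(\cdot,k)$ equals $H_X(\cdot,k;F)$, which by the disc computation is exactly $F$ in degree $k$ and $0$ elsewhere — precisely the condition in Theorem \ref{R-cohomology}, so $R(\Gamma)$ is Koszul. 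This closes the induction, giving (1) for intervals, hence (1) in general by Corollary \ref{cyclic}; and then (2) for general $X$ follows because $C_X(n,k) = \oplus_\alpha C_X(n,k)_\alpha$ and the argument localizes over each $k$-cell $\alpha$ to the interval situation just handled (identifying $C_X(\cdot,k)_\alpha$ with $C^\cdot(X,Y_\alpha)$ and matching against $r_\alpha R_{n-k}$).

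The main obstacle is the injectivity step: proving $\ker \Phi_X = 0$ rather than merely that $\Phi_X$ is a quasi-isomorphism. The danger is that $L_X(\cdot,k;F)$ and $R_\Gamma(\cdot,k)$ could a priori have the same cohomology but different (larger) underlying graded vector spaces, with $\Phi_X$ killing an acyclic complement. I expect this is ruled out by a careful downward induction on $n$: at $n=d$ one shows $\Phi_X: L_X(d,k;F) \to R_\Gamma(d,k)$ is injective directly (both sides describe paths from the unique top cell $\beta$, and relations match up via thinness as in Lemma \ref{pathindep}), and then descends using the known vanishing of $H_X(n,k;F)$ and $H^n(R_\Gamma(\cdot,k),d_\Gamma)$ in intermediate degrees together with exactness of $0 \to \ker\Phi_X \to L_X \to R_\Gamma \to 0$ — if $\ker\Phi_X$ were nonzero in some top degree it would contribute spurious cohomology. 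Handling the interaction of the two differentials $d_X$ and $\hat d_X$ cleanly (via Lemma \ref{shortex}) is where the bookkeeping is delicate, but no genuinely new idea beyond Sections 3 and 4 should be needed.
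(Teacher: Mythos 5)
Your overall skeleton matches the paper's: induction on $\dim X$, reduction to the interval case $X=X_\beta$ via Corollary \ref{cyclic}, and comparison of $L_X(\cdot,k;F)$ with $R_\Gamma(\cdot,k)$ through the surjection $\Phi_X$ so that Theorem \ref{R-cohomology} applies. But the injectivity step --- which you correctly identify as the crux --- is not actually carried out, and both routes you sketch for it fail. The ``direct'' argument at $n=d$: injectivity of $\Phi_X:L_X(d,k)\to R_\Gamma(d,k)=r_\beta R_{d-k}$ amounts to knowing that the only linear relations among the monomials $r_\pi$ are those generated by $\hat d_X$, i.e.\ to exhibiting a basis of $r_\beta R_{d-k}$; thinness and Lemma \ref{pathindep} only account for the length-two diamond relations and give no control over relations buried deeper in the ideal $I$. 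The downward descent: you invoke ``the known vanishing of $H^n(R_\Gamma(\cdot,k),d_\Gamma)$ in intermediate degrees,'' but Lemma \ref{easycohom} pins this cohomology down only in degrees $k$, $d-1$ and $d$; its vanishing for $k<n<d-1$ is, by Theorem \ref{R-cohomology}, equivalent to the Koszulity you are trying to prove, so appealing to it is circular. Relatedly, the general principle you lean on (a surjective quasi-isomorphism of bounded complexes that is injective at the top must be injective everywhere) is false: an acyclic kernel concentrated in two adjacent lower degrees is a counterexample.

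The missing idea is that for $n<d$ the whole comparison lives on the boundary sphere: $L_{X_\beta}(n,k)=L_{\partial X_\beta}(n,k)$ and $R_{\Gamma_\beta}(n,k)=R_{\Omega}(n,k)$ where $\Omega=\bar P(\partial X_\beta)$, and $\partial X_\beta$ is a regular CW complex of dimension $d-1$, so the dimension induction makes $\Phi$ an isomorphism in \emph{every} degree $n<d$ simultaneously. Hence $K(d,k)=\ker\Phi_X$ is concentrated in the single degree $d$, and the tail of the long exact sequence,
$$0\to H_X(d-1,k)\to H^{d-1}(R_\Gamma(\cdot,k))\to K(d,k)\to H_X(d,k)\to H^{d}(R_\Gamma(\cdot,k))\to 0,$$
forces $K(d,k)=0$ using only the top-degree computations of Lemma \ref{easycohom} together with Lemma \ref{topology2} and Remark \ref{disc} --- no knowledge of the intermediate cohomology of $R_\Gamma(\cdot,k)$ is needed. (A smaller slip: in the general case you propose to localize over $k$-cells $\alpha$, but $L_X(n,k)$ is a cokernel of $\hat d_X$ and therefore decomposes only over the $n$-cells $\beta$; it is that decomposition, matched against $\bigoplus_\beta r_\beta R_{n-k}$ from Lemma \ref{rtideal}, which reduces the general case to the interval case.)
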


\begin{proof}
Let $d = dim(X)$. To simplify notation we will suppress tensoring with
$F$ and just write $L_X(n,k)$ rather than $L_X(n,k;F)$. 
We proceed by induction, assuming both (1) and (2) for any
regular cell complex of dimension smaller than $d$.  There is nothing to prove if
$d=0$, since in that case the algebra is a quadratic monomial algebra and all such algebras are Koszul (cf. \cite{PP}).  Moreover when $d=0$, part (2) is trivial. 

We consider first the special case that $X$ has a unique $d$-cell, $\beta$, i.e. $X = X_\beta$.  Consider:
$$ \begin{array}{cccccccccc}
0& \to & L_X(k,k) & \buildrel d_X\over \to &\ \ldots\  & \buildrel d_X \over \to & L_X(d-1,k) 
	& \buildrel d_X \over \to  & L_X(d,k) &\to 0 \\[6 pt]
&& \Phi_X \downarrow \hphantom{\Phi_X}
&&&&\Phi_X \downarrow \hphantom{\Phi_X}
&&\Phi_X \downarrow \hphantom{\Phi_X}\\[6 pt]
0& \to & R_\Gamma(k,k) & \buildrel d_\Gamma \over \to &\ \ldots\  & \buildrel d_\Gamma \over \to & R_\Gamma(d-1,k) 
	& \buildrel d_\Gamma \over \to  & R_\Gamma(d,k) & \buildrel d_\Gamma\over \to 0 \\
\end{array}$$
Since $\partial X$ is a $(d-1)$-sphere, (1) and (2) hold for $\partial X$.  
Let $\Omega = \bar P(\partial X)$.  The Koszul algebra 
$R(\Omega)$ is a subalgebra of $R(\Gamma)$ and we have  
$R_\Gamma(n,k) = R_\Omega(n,k)$ for all $0\le k\le n<d$.   Similarly we have $L_X(n,k) = L_{\partial X}(n,k)$ and 
$\Phi_X = \Phi_{\partial X}:L_{\partial X}(n,k)\to R_\Omega(n,k)$ whenever $0\le k\le n<d$.  

By induction, the maps $\Phi_{\partial X}$ are all isomorphisms and thus all but the
rightmost downward map in the diagram is an isomorphism.  The rightmost
downward map is an epimorphism.  Thus to prove (2) it suffices to see that
$\Phi_X:L_X(d,k) \to R_\Gamma(d,k)$ is injective. Let $K(d,k)$ be the kernel.  
Then we can consider $0\to K(d,k)\to 0$ as the cochain complex kernel of $\Phi_X$ and so we get a long exact sequence associated to the diagram above and that long exact sequence ends with:
$$\begin{array}{rl} 0\to H_X(d-1,k) \to & H^{d-1}(R_\Gamma(\cdot,k),d_\Gamma) \to K(d,k)\\[6 pt]
& \to H_X(d,k)  \to H^d(R_\Gamma(\cdot,k),d_\Gamma) \to 0.\\
\end{array}$$
By combining \ref{easycohom} and \ref{topology2} we see the following: If $k=d$ then the last two terms are $F$ and the first two terms are $0$, so $K(d,d)=0$.  
If $k=d-1$, then the first two terms are $F$ and the last two terms are $0$, so
$K(d,d-1)=0$.  If $k<d-1$ then the first two and last two terms are all $0$, so
$K(d,k) = 0$.  This proves (2). Moreoever, we see that $H^n(R_\Gamma(\cdot,k)d_\Gamma) =  H_X(n,k)$ for all $n$ and $k$.  Then Theorem \ref{R-cohomology} tells us $R(\Gamma)$ is Koszul, proving
(1). 

Now we can turn to the general case of a $d$-dimensional regular CW complex $X$.
For any cell $\beta$ of $X$, we know from the special case above
that $R(\Gamma_\beta)  = R(\bar P(X_\beta))$ is Koszul.  Hence by Corollary \ref{cyclic},
$R(\Gamma)$ is Koszul, proving (1).  

To prove (2) it suffices to prove $\Phi_X:L_X(n,k) \to R_\Gamma(n,k)$ is injective.  As above, we already know this inductively for $n<d$.  Let $\{\beta_1,\ldots,\beta_s\}$ be the set
of all $d$-cells of $X$.  By definition $C_X(d,k) = \oplus_iC_X(d,k)^{\beta_i}$ is a $\hat d_X$ chain complex decomposition. 
Thus $L_X(d,k)$ inherits a direct sum decomposition into subspaces
$L_X(d,k)^{\beta_i}$.  But this is nothing other than $L_{X_{\beta_i}}(d,k)$, which
by the special case is mapped injectively into $R_{\Gamma_{\beta_i}}(d,k)$.  By
\ref{rtideal}, $R_\Gamma(d,k) = \oplus_i R_{\Gamma_{\beta_i}}(d,k)$.  Thus $\Phi_X$ is injective, proving (2).  
\end{proof}

Theorem \ref{main1} follows from Theorem \ref{easy}.  

We now turn to the more subtle question of analyzing $R(\hat P(X))$.  Whenever 
$X$ is a $d$-dimensional regular complex that is connected by $(d-1)$-faces, we will
write $\hat \Gamma$ for  $\hat P(X)$.  The following theorem gives necessary and sufficient conditions for
$R(\hat \Gamma)$ to be Koszul.

\begin{thm}\label{hard}  Let $X$ be a $d$-dimensional regular CW complex and assume
$X$ is connected by $(d-1)$-faces.  Then $R(\hat \Gamma)$ is Koszul if and only if
$H_X(n,k;F) = 0$ for all $0\le k<n<d$.    
\end{thm}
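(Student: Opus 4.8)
The plan is to set up a comparison of cochain complexes exactly as in the proof of Theorem \ref{easy}, but now for $\hat\Gamma = \hat P(X)$ instead of $\bar P(X)$, and to isolate precisely the extra cohomology that appears because of the adjoined top element $\bar 1$. Write $\hat\Gamma = \hat\Gamma_{\bar 1}$, so $\bar 1$ has rank $d+1$ and $\hat\Gamma' = P(X) \cup \{\bar 0\}$. The key observation is that for vertices $y \in (\bar 0, \bar 1)$ — that is, for genuine cells of $X$ — one has $\hat\Gamma_y = \bar P(X_y)$, so by Theorem \ref{easy} the algebras $R(\hat\Gamma_y)$ are all Koszul. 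Thus Theorem \ref{R-cohomology} applies directly: $R(\hat\Gamma)$ is Koszul if and only if, for each $0 \le k \le d$, the cochain complex $(R_{\hat\Gamma}(\cdot,k), d_{\hat\Gamma})$ has $H^n = 0$ for $n \ne k$ and $H^k = F$. So the entire problem reduces to computing these cohomology groups and matching them against the vanishing condition $H_X(n,k;F) = 0$ for $0 \le k < n < d$.

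The next step is to identify $(R_{\hat\Gamma}(\cdot,k), d_{\hat\Gamma})$ topologically. For ranks $n+1 \le d$ the rank-$(n+1)$ vertices of $\hat\Gamma'$ are exactly the $n$-cells of $X$, and the relations in $R(\hat\Gamma)$ among products not involving $\bar 1$ coincide with those in $R(\bar P(X))$; hence $R_{\hat\Gamma}(n,k) = R_{\bar P(X)}(n,k)$ for $0 \le k \le n \le d-1$, and by Theorem \ref{easy}(2) this is $L_X(n,k;F) \cong H^{?}$-computing object with $H^n(R_{\hat\Gamma}(\cdot,k)) = H_X(n,k;F)$ in that range. The new phenomenon is at the top: $R_{\hat\Gamma}(d,k) = r_{\bar 1} R_{d-k}$, a single summand indexed by $\bar 1$, and $d_{\hat\Gamma}: R_{\hat\Gamma}(d-1,k) \to R_{\hat\Gamma}(d,k)$ is left multiplication by $r_{\bar 1}$ (plus terms that vanish). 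Using the connected-by-$(d-1)$-faces hypothesis and Lemma \ref{updown} applied to $\hat\Gamma$, together with Remark \ref{basecase} and the structure results of Section 3, I would show that this last map is surjective with kernel exactly the image of $d_{\hat\Gamma}$ from the previous stage — i.e. that appending the $r_{\bar 1}$-column kills $H^{d-1}$ and creates no $H^d$ beyond what is forced. Concretely: $r_{\bar 1}$ has rank $d+1$, $r_{\bar 1}(1) = d_{\bar P(X)}$-type sum, and $rann_R(r_{\bar 1}) \cap R_1 = r_{\bar 1}(1)R \oplus H_{\bar 1}(1)$ by Remark \ref{basecase}; combined with Theorem \ref{easy} this pins down $H^d(R_{\hat\Gamma}(\cdot,k)) = 0$ for $k < d$ and $= F$ for $k = d$, and shows $H^{d-1}(R_{\hat\Gamma}(\cdot,k))$ is an extension involving $H_X(d-1,k;F)$ and a coker/ker of the $r_{\bar 1}$-map that I expect to vanish.

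Assembling: for $0 \le k < n \le d-1$ we get $H^n(R_{\hat\Gamma}(\cdot,k)) = H_X(n,k;F)$ by the Theorem \ref{easy} identification; for $n = d$ the group vanishes (for $k<d$) automatically; and for $n = d-1$ a short-exact-sequence / long-exact-sequence argument — parallel to the one ending the proof of Theorem \ref{easy}, using the kernel complex $0 \to K(d,k) \to 0$ of the comparison map $\Phi_X: L_X(\cdot,k;F) \to R_{\hat\Gamma}(\cdot,k)$ — reduces $H^{d-1}(R_{\hat\Gamma}(\cdot,k))$ to $H_X(d-1,k;F)$ together with correction terms that Lemma \ref{shortex} and Lemma \ref{topology1} control. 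Hence the Koszulity criterion of Theorem \ref{R-cohomology}, namely $H^n(R_{\hat\Gamma}(\cdot,k)) = 0$ for $k < n \le d$ (the $H^k = F$ part being automatic from Lemma \ref{easycohom}(1)), becomes exactly $H_X(n,k;F) = 0$ for all $0 \le k < n < d$, since the $n = d$ conditions are free and the $n = d-1$ conditions collapse to the $H_X(d-1,k;F) = 0$ statements already included. The main obstacle I anticipate is the top-rank bookkeeping: proving cleanly that adjoining the single vertex $\bar 1$ contributes no spurious cohomology in degrees $d-1$ and $d$ except the expected $F$ in bidegree $(d,d)$ — this is where the connected-by-$(d-1)$-faces hypothesis (i.e. uniformity of $\hat\Gamma$, via Lemma \ref{updown}) must be used essentially, and where one has to be careful that $\hat\Gamma$ need not be thin, so the clean diamond/thinness arguments of Section 2 are unavailable at the very top.
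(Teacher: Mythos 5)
Your overall strategy is the paper's: reduce via Theorem \ref{R-cohomology} (whose hypotheses hold because every $\hat\Gamma_y$ with $y\in(\bar 0,\bar 1)$ equals $\bar P(X_y)$, hence is Koszul by Theorem \ref{easy}), dispose of the extreme cohomological degrees with Lemma \ref{easycohom}, and identify the remaining groups with $H_X(n,k;F)$ via Theorem \ref{easy}(2). But there is a concrete error that creates a genuine gap: the rank of $\bar 1$ in $\hat P(X)$ is $d+2$, not $d+1$ (a $k$-cell has rank $k+1$, so the $d$-cells have rank $d+1$ and $\bar 1$ sits above them). Consequently the new top term $r_{\bar 1}R_{\,\cdot}$ is $R_{\hat\Gamma}(d+1,k)$, not $R_{\hat\Gamma}(d,k)$; in fact $R_{\hat\Gamma}(n,k)=R_{\bar P(X)}(n,k)$ for all $0\le k\le n\le d$, and the differentials agree in those degrees as well, since left multiplication by $r_{\bar 1}$ kills $r_yR$ unless $y$ is a $d$-cell. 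Your off-by-one leads you to believe the identification with $H_X(n,k;F)$ breaks at $n=d-1$ and must be repaired by a separate ``top-rank'' computation involving $r_{\bar 1}$, $\operatorname{rann}(r_{\bar 1})$ and correction terms --- and that computation is exactly the part you leave as ``I would show'' and ``I expect to vanish.'' As written, the case $n=d-1$ of the equivalence is therefore not proved.

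The repair is that no such computation is needed. With the correct indexing, Theorem \ref{R-cohomology} (applied with its ``$d$'' equal to $d+1$) says $R(\hat\Gamma)$ is Koszul iff $H^n(R_{\hat\Gamma}(\cdot,k),d_{\hat\Gamma})$ vanishes for $k<n\le d+1$ and equals $F$ at $n=k$; Lemma \ref{easycohom}(1)--(3) handle $n=k$, $n=d+1$ and $n=d$ outright, leaving only $k<n<d$. Computing $H^n$ for $n\le d-1$ uses only the terms $R_{\hat\Gamma}(m,k)$ with $m\le d$, which coincide with the complex for $\bar P(X)$, hence with $L_X(\cdot,k;F)$ by Theorem \ref{easy}(2), giving $H^n(R_{\hat\Gamma}(\cdot,k))=H_X(n,k;F)$ there. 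The worry you flag about $\hat\Gamma$ failing to be thin near $\bar 1$ is likewise moot: thinness is only used in Lemma \ref{pathindep} inside $\bar P(X)$, and the degrees touched by $\bar 1$ are already covered by the purely algebraic Lemma \ref{easycohom}.
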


\begin{proof}  Recall that we denote the unique maximal element $\hat \Gamma$ by $\bar 1$.  
The rank of $\bar 1$ is $d+2$.
By \ref{easy}, the subalgebra $R(\Gamma)$ is Koszul and in particular, for every
cell $\beta \in (\bar 0,\bar 1)$, $R(\hat \Gamma_\beta) = R(\bar P(X_\beta))$ is Koszul.  Hence we can combine
Lemma \ref{easycohom} with Theorem \ref{R-cohomology} to see that $R(\hat \Gamma)$ is Koszul if and only if
$H^n(R_{\hat \Gamma}(\cdot,k),d_{\hat \Gamma}) = 0$ for all $0\le k <n<d$.   
But for $0\le k\le n\le d$,   
$R_{\hat \Gamma}(n,k) = R_\Gamma(n,k)$ and so these cochain complexes have the same cohomology
for all $0\le k\le n< d$.  Applying part (2) of \ref{easy}, we see that
$R(\hat \Gamma)$ is Koszul if and only if $H_X(n,k;F) = 0$ for all $0\le k<n<d$, 
as required.  
\end{proof}

\begin{rmk} In the argument above, $L_X(n,k;F)$ and $R_{\hat \Gamma(n,k)}$ are isomorphic for $n<d+1$.  There is no term of the form $L_X(d+1,k;F)$ in the 
$L_X(n,k;F)$ cochain,  but there is an $R_{\hat \Gamma}(d+1,k)$ term in the
$R_{\hat \Gamma}(n,k)$ cochain.  This explains why $H_X(d,k;F)$ can be nonzero even though, by \ref{easycohom}, $H^d(R_{\hat \Gamma}(\cdot,k),d_\Gamma) = 0$ for $k<d$.
\end{rmk}

We have several immediate corollaries.

\begin{cor}\label{bouquet}
If $R(\hat \Gamma)$ is Koszul, then $H^n(X;F) = 0$ for $0<n<d$.  
\end{cor}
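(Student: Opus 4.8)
The plan is to read this off directly from Theorem \ref{hard} together with the identification of the groups $H_X(n,0)$ in Lemma \ref{topology1}. First I would invoke Theorem \ref{hard}: since $X$ is connected by $(d-1)$-faces, the hypothesis that $R(\hat\Gamma)$ is Koszul is equivalent to the vanishing $H_X(n,k;F) = 0$ for all $0 \le k < n < d$. Specializing to $k = 0$, this gives $H_X(n,0;F) = 0$ for all $0 < n < d$.

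Next I would identify these groups topologically. By Lemma \ref{topology1}, or rather its field-coefficient version recorded in the remark following it, there is a natural cochain isomorphism $L_X(\cdot,0;F) \cong C^\bullet(X;F)$, and hence $H_X(n,0;F) = H^n(X;F)$ for all $n$. Combining this with the vanishing obtained in the previous step yields $H^n(X;F) = 0$ for $0 < n < d$, which is the assertion.

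There is no real obstacle here; the corollary is purely a matter of citing Theorem \ref{hard} at $k=0$ and Lemma \ref{topology1}. The only point worth a word of care is that Theorem \ref{hard} requires $X$ to be connected by $(d-1)$-faces, which is exactly the standing hypothesis under which $\hat\Gamma = \hat P(X)$ is defined and uniform, so this is available without further comment.

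\begin{proof}
Since $X$ is connected by $(d-1)$-faces, Theorem \ref{hard} applies: the Koszulity of $R(\hat\Gamma)$ is equivalent to the vanishing of $H_X(n,k;F)$ for all $0 \le k < n < d$. Taking $k = 0$ gives $H_X(n,0;F) = 0$ for all $0 < n < d$. By Lemma \ref{topology1} (with coefficients in $F$, as noted in the remark following it), $H_X(n,0;F) = H^n(X;F)$. Hence $H^n(X;F) = 0$ for $0 < n < d$.
\end{proof}
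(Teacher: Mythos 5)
Your proof is correct and is essentially identical to the paper's: the paper also deduces the corollary by applying Theorem \ref{hard} at $k=0$ and identifying $H_X(n,0;F)$ with $H^n(X;F)$ via Lemma \ref{topology1}.
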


\begin{proof}  $H^n(X;F) = H_X(n,0;F)$.
\end{proof}

\begin{cor}  If $X$ is a 2-dimensional regular complex and is connected by 1-faces, then
$R(\hat \Gamma)$ is Koszul if and only if $H^1(X;F) = 0$.
\end{cor}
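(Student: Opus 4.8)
The plan is to derive this corollary as the $d=2$ case of Theorem \ref{hard}, with no further work beyond unwinding the index range. Since $X$ is $2$-dimensional and connected by $1$-faces, Theorem \ref{hard} applies verbatim and tells us that $R(\hat\Gamma)$ is Koszul if and only if $H_X(n,k;F) = 0$ for all $0\le k < n < 2$.

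The next step is to observe that the condition $0\le k < n < 2$ forces $n=1$ and $k=0$, so the entire family of vanishing conditions collapses to the single requirement $H_X(1,0;F) = 0$. Thus $R(\hat\Gamma)$ is Koszul if and only if $H_X(1,0;F)=0$.

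Finally, I would invoke Lemma \ref{topology1} in its field-coefficient form (the remark following that lemma records $H_X(n,0;F) = H^n(X;F)$), which identifies $H_X(1,0;F)$ with $H^1(X;F)$. Combining the two equivalences gives exactly the claim. There is no real obstacle here: all the substantive work — the passage from the Koszul property of $R(\hat\Gamma)$ to vanishing of the groups $H_X(n,k;F)$, via Theorem \ref{R-cohomology}, Lemma \ref{easycohom} and Theorem \ref{easy} — has already been carried out in Theorem \ref{hard}, so this corollary is purely a matter of specializing the dimension and reading off the one surviving cohomology group.
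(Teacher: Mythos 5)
Your proposal is correct and matches the paper's own proof exactly: the paper likewise specializes Theorem \ref{hard} to $d=2$, notes that the only pair with $0\le k<n<2$ is $(n,k)=(1,0)$, and identifies $H_X(1,0;F)$ with $H^1(X;F)$ via Lemma \ref{topology1}.
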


\begin{proof}
According to \ref{hard}, the only obstruction to the Koszul property is $H_X(1,0;F) = H^1(X;F)$.
\end{proof}

When $X$ is a manifold or a manifold with boundary we combine \ref{hard} with Lemma \ref{topology2} to get a partial converse to \ref{bouquet}.

\begin{cor}
If $X$ is a $d$-dimensional connected manifold or manifold with boundary then
$R(\hat \Gamma)$ is Koszul if and only if $H^n(X;F) = 0$ for all $0<n<d$. 
\end{cor}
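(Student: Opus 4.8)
The plan is to reduce everything to Theorem \ref{hard} combined with Lemma \ref{topology2}. The statement to be proved is: if $X$ is a $d$-dimensional connected manifold or manifold with boundary, then $R(\hat\Gamma)$ is Koszul if and only if $H^n(X;F)=0$ for all $0<n<d$. First I would observe that a connected manifold (or manifold with boundary) of dimension $d$ is automatically a pure regular CW complex that is connected by $(d-1)$-faces, so that $\hat\Gamma = \hat P(X)$ is a uniform ranked poset and Theorem \ref{hard} applies. (Strictly speaking one should say that $X$ is assumed to carry a regular CW structure; purity holds because every cell lies in the closure of a top cell in a manifold, and connectivity by $(d-1)$-faces is the cellular translation of connectedness of the manifold away from its codimension-$2$ skeleton.)

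Next I would invoke Theorem \ref{hard}: $R(\hat\Gamma)$ is Koszul if and only if $H_X(n,k;F)=0$ for all $0\le k<n<d$. So the whole corollary comes down to comparing the vanishing of the groups $H_X(n,k;F)$ for $0\le k<n<d$ with the vanishing of $H^n(X;F)=H_X(n,0;F)$ for $0<n<d$. For the forward direction this is immediate from Lemma \ref{topology1} (i.e. from $H_X(n,0;F)=H^n(X;F)$): Koszulity forces $H_X(n,k;F)=0$ for all admissible $k$, in particular for $k=0$, which is exactly Corollary \ref{bouquet}.

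For the reverse direction I would use Lemma \ref{topology2}(2). Assume $H^n(X;F)=0$ for $0<n<d$; equivalently $\tilde H^n(X;F)=0$ for $n<d$ (the $n=0$ case being connectedness). Since $X$ is a connected manifold or manifold with boundary, Lemma \ref{topology2}(2) applies verbatim and yields $H_X(n,k;F)=0$ for all $0\le k<n<d$ (and $H_X(k,k;F)=F$, which is not needed here). By Theorem \ref{hard}, $R(\hat\Gamma)$ is Koszul. This closes the equivalence.

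The only genuinely nontrivial input is Lemma \ref{topology2}, which is proved earlier and which I would simply cite; the only point requiring a word of care is the passage from "manifold" to "pure regular CW complex connected by $(d-1)$-faces" so that Theorem \ref{hard} is legitimately applicable — that is the step I expect to be the main (and essentially only) obstacle, and it is a standard fact about manifold CW structures rather than something requiring new argument. Everything else is a direct chain of citations: $H^n(X;F)=H_X(n,0;F)$ by Lemma \ref{topology1}, the vanishing of all $H_X(n,k;F)$ by Lemma \ref{topology2}(2), and the Koszulity criterion of Theorem \ref{hard}.
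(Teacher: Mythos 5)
Your proposal is correct and is exactly the paper's argument: the paper proves this corollary by combining Theorem \ref{hard} with Lemma \ref{topology2} (the forward direction being Corollary \ref{bouquet} via $H_X(n,0;F)=H^n(X;F)$). Your additional remark about verifying that a connected manifold with a regular CW structure is pure and connected by $(d-1)$-faces is a reasonable point of care that the paper leaves implicit.
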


We can also restate \ref{hard} in more familiar topological terms.

\begin{cor}\label{topology3}
Let $X$ be a $d$-dimensional regular complex and assume
$X$ is connected by $(d-1)$-faces.  Then $R(\hat \Gamma)$ is Koszul if and only if $H^n(X;F) = 0$ for 
$0<n<d$ and for any cell $e_\alpha$ of $X$, $H^n(X,Y_\alpha;F) = 0$ for all $n<d$. 
\end{cor}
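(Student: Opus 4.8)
The plan is to rewrite the condition of Theorem \ref{hard} --- that $H_X(n,k;F)=0$ for all $0\le k<n<d$ --- in purely topological terms using the three lemmas of Section 4. For $k=0$, Lemma \ref{topology1} gives $H_X(n,0;F)=H^n(X;F)$, so those instances are exactly ``$H^n(X;F)=0$ for $0<n<d$''. For $k\ge 1$ I will use the short exact sequence of cochain complexes of Lemma \ref{shortex}; its long exact cohomology sequence reads
$$\cdots\to H_X(n-1,k-1;F)\to H_X(n,k;F)\to H^n(C_X(\cdot,k-1),d_X)\to H_X(n,k-1;F)\to\cdots,$$
and the decomposition $C_X(\cdot,k-1)=\bigoplus_\alpha C_X(\cdot,k-1)_\alpha$ over $(k-1)$-cells $\alpha$, together with the identification $H^n(C_X(\cdot,k-1)_\alpha,d_X)=H^n(X,Y_\alpha;F)$, yields $H^n(C_X(\cdot,k-1),d_X)=\bigoplus_\alpha H^n(X,Y_\alpha;F)$. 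Thus the groups $H^n(X,Y_\alpha;F)$ are precisely what interpolates between consecutive complexes $H_X(\cdot,k;F)$.

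With this in place, the implication ``$H^n(X;F)=0$ for $0<n<d$ and $H^n(X,Y_\alpha;F)=0$ for all cells $\alpha$ and $n<d$ $\Rightarrow$ $R(\hat\Gamma)$ Koszul'' is an easy induction on $k$: in the displayed sequence $H_X(n-1,k-1;F)=0$ by the inductive hypothesis and $H^n(C_X(\cdot,k-1),d_X)=\bigoplus H^n(X,Y_\alpha;F)=0$ by hypothesis (valid since $n<d$), so $H_X(n,k;F)=0$ for all $0\le k<n<d$, and Theorem \ref{hard} applies. For the converse, assume $R(\hat\Gamma)$ is Koszul, so $H_X(n,k;F)=0$ for $0\le k<n<d$; the $k=0$ case gives $H^n(X;F)=0$ for $0<n<d$. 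For an $m$-cell $\alpha$, the group $H^n(X,Y_\alpha;F)$ vanishes trivially for $n<m$, and for $n=m<d$ it vanishes because purity of $X$ forces $e_\alpha$ to lie in the boundary of some $(m+1)$-cell $\gamma$ with $d(\gamma,\alpha)=\pm1$ (take the second vertex of a maximal chain from $\alpha$ up to a $d$-cell containing it), so $d_X v(\alpha)\ne 0$ in $C^{m+1}(X,Y_\alpha;F)$. It remains to treat $m<n<d$, i.e. to show $H^n(C_X(\cdot,m),d_X)=0$ for $0\le m<n<d$, which I read off the long exact sequence of $0\to L_X(\cdot,m+1)\to C_X(\cdot,m)\to L_X(\cdot,m)\to 0$.

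The step I expect to be the main obstacle is the borderline case $n=m+1$, where the neighbouring terms $H_X(m,m;F)$ and $H_X(m+1,m+1;F)$ are not zero but each equal to $F$; this follows from Lemma \ref{easycohom}(1) applied to the interval $\hat\Gamma=\hat\Gamma_{\bar 1}$, since by Theorem \ref{easy}(2) the complexes $L_X(\cdot,k;F)$ and $R_{\hat\Gamma}(\cdot,k)$ agree (hence have the same cohomology) in degrees $<d$. For $n\ge m+2$ the long exact sequence gives $H^n(C_X(\cdot,m),d_X)=0$ directly from $H_X(n,m;F)=H_X(n,m+1;F)=0$. For $n=m+1$ it reads
$$H^m(C_X(\cdot,m),d_X)\to H_X(m,m;F)\buildrel\delta\over\to H_X(m+1,m+1;F)\to H^{m+1}(C_X(\cdot,m),d_X)\to H_X(m+1,m;F);$$
the left-hand group is $0$ by the purity remark above ($m<d$) and the right-hand group is $0$ by hypothesis, so $\delta$ is an injective map $F\to F$, hence an isomorphism, forcing $H^{m+1}(C_X(\cdot,m),d_X)=0$. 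Assembling the cases gives $H^n(X,Y_\alpha;F)=0$ for all $n<d$ and all cells $\alpha$, completing the proof. (When $X$ is a manifold this subtlety evaporates: by Lemma \ref{topology2}(1) even the individual groups $H^n(X,Y_\alpha;F)$ vanish for $n<d$, so the second condition is automatic.)
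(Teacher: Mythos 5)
Your proof is correct and follows essentially the same route as the paper's, which disposes of both directions in three sentences by pointing back to the long exact sequences from Lemma \ref{shortex} used in the proof of Lemma \ref{topology2}. In fact you are more careful than the paper: your handling of the cases $n\le m$ (via purity) and especially of the borderline case $n=m+1$ --- where the naive squeeze between $H_X(n,m+1;F)$ and $H_X(n,m;F)$ fails because $H_X(m+1,m+1;F)=F\neq 0$, so one must instead show the connecting map $H_X(m,m;F)\to H_X(m+1,m+1;F)$ is an injection $F\to F$, hence onto --- correctly fills in a step the paper's converse glosses over.
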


\begin{proof}
As usual we supress $F$. The proof of Lemma \ref{topology2} shows how to lift the vanishing of the cohomology groups
$H^n(X,Y_\alpha)$ and $\tilde H^n(X)$ for $n<d$ to the vanishing of the cohomology
groups $H_X(n,k)$ for $0\le k <n<d$.  This proves the if part of the corollary.  Conversely, if we know
that $H_X(n,k) =0$ for $0\le k<n<d$, then the long exact cohomology sequence used in \ref{topology2}
proves that $H^n(C_X(\cdot,k),d_X) = 0$ for $0\le k<n<d$.  But this cohomology is the direct sum
of the cohomology groups $H^n(X,Y_\alpha)$. 
\end{proof}

We end the paper by using \ref{topology3} to analyze an example which will prove that the converse of
Corollary \ref{bouquet} is not true.  This example amply illustrates the strength of our topological tools.

\begin{example}\label{singular}  Let $X$ be the 3-dimensional regular CW complex obtained as follows.  Begin with two simplicial 3-cells (solid tetrahedra) $A_1$ and 
$A_2$.  On each $A_i$, choose one 2-simplex $\beta_i$ and one 1-simplex $\alpha_i$ that is not an edge of $\beta_i$.  Then $\alpha_i$ and $\beta_i$ must have a common boundary point $\zeta_i$.  Glue $\beta_1$ to $\beta_2$ in such a way that $\zeta_1$ and $\zeta_2$ are identified.  Call the new resulting face $\beta$.  Then glue $\alpha_1$ to $\alpha_2$ and call the resulting
edge $\alpha$. An attempt to draw this construction is given below. The resulting solid, $X$, has 2 3-cells, 7 2-cells, 8 1-cells and 4 0-cells.  The Hasse diagram of 
$\hat P(X)$ is also shown below.  In this diagram, $\beta$ is $B_4$,
$\alpha$ is $C_4$ and the 0-cell common to the boundary of $\beta$ and $\alpha$, indicated by $\zeta$ in the picture, is $D_3$.  The regular CW complex $X$ is not simplicial but it is connected by 2-faces. 

  \begin{center}
     \includegraphics[width=.8 \textwidth]{Example}
  \end{center}
 
\xymatrix{
 & & & & X=\bar 1 \ar[dl] \ar[dr] \cr
 & & & A_1 \ar[dll] \ar[dl] \ar[d] \ar[dr] & &A_2 \ar[dl] \ar[d] \ar[dr] \ar[drr] \cr
 & B_1 \ar[ddl] \ar[dd] \ar[ddrrr] & 
 B_2  \ar[ddll] \ar[ddr] \ar[dd] & 
 B_3  \ar[ddll] \ar[dd] \ar[ddrr] & 
 B_4 \ar[ddll] \ar[dd] \ar[ddr] & 
 B_5 \ar[ddlll] \ar[ddll]  \ar[ddr] & 
 B_6 \ar[ddl] \ar[ddlll] \ar[ddr]& 
 B_7 \ar[ddlll] \ar[ddl] \ar[dd] \cr
 \cr
 C_1 \ar[ddrr] \ar[ddrrr] & 
 C_2 \ar[ddr] \ar[ddrrrr] & 
 C_3 \ar[ddr] \ar[ddrr] & 
 C_4 \ar[ddl] \ar[ddr] & 
 C_5 \ar[ddl] \ar[ddr] & 
 C_6 \ar[ddl] \ar[dd] & 
 C_7 \ar[ddlll] \ar[ddllll] & 
 C_8 \ar[ddlllll] \ar[ddll]\cr
 \cr
 & & D_1 \ar[dr] & D_2 \ar[d] & D_3 \ar[dl] & D_4 \ar[dll] \cr
& & & \overline{0}
 }
\end{example}

\begin{thm}\label{theexample}  Let $X$ be the 3-dimensional regular CW complex described in \ref{singular}.  Then
$H^n(X;F) = 0$ for all $0<n<3$, but $R(\hat \Gamma)$ is not Koszul.
\end{thm}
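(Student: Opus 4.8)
The plan is to apply Corollary \ref{topology3}, which reduces the non-Koszulity of $R(\hat\Gamma)$ to finding a single cell $e_\alpha$ of $X$ for which $H^n(X,Y_\alpha;F)\ne 0$ for some $n<3$, while separately confirming the hypothesis $H^n(X;F)=0$ for $0<n<3$. The verification that $H^n(X;F)=0$ for $n=1,2$ should be routine: the space $\|X\|$ is obtained by gluing two solid tetrahedra along a triangle and then identifying a pair of disjoint edges; one computes its (co)homology directly, e.g. from the given Hasse diagram using the cellular chain complex, or by a Mayer--Vietoris / collapsing argument (each $A_i$ is contractible, the glued triangle $\beta$ is contractible, so $A_1\cup_\beta A_2$ is contractible; the extra identification of $\alpha_1$ with $\alpha_2$ attaches, up to homotopy, a single $1$-cell identification that creates $H_1$ only if it creates a loop — and here it does not, since $\alpha_1,\alpha_2$ lie in the contractible pieces in a way that the quotient remains simply connected). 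I would present this computation cleanly and record $\tilde H^*(X;F)=0$, so in particular $H^n(X;F)=0$ for $0<n<3$.

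The heart of the argument is the choice $\alpha = C_4$, the edge obtained by gluing $\alpha_1$ to $\alpha_2$, and the claim that $H^n(X,Y_\alpha;F)\ne 0$ for some $n<3$. Recall $Y_\alpha$ is the subcomplex of all cells $e_\gamma$ with $e_\alpha\not\subset D_\gamma$, so $Y_\alpha$ is the complement of the open star of $\alpha$. First I would identify $Y_\alpha$ up to homotopy: as in the proof of Lemma \ref{topology2}(1), $Y_\alpha$ is a deformation retract of $\|X\|\setminus e_\alpha$. The key point is that $\alpha$ is a \emph{singular} edge — it is \emph{not} contained in any $2$-cell that is a face of exactly one $3$-cell in the "manifold" way; rather, because $\alpha_i$ was chosen in $A_i$ to be an edge disjoint from $\beta_i$, the link of $\alpha$ in $X$ is \emph{not} a circle. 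Concretely, the cells above $C_4$ in the Hasse diagram are $B_4$ (the glued triangle $\beta$, which is a face of both $A_1$ and $A_2$) together with the other $2$-cells and the two $3$-cells $A_1, A_2$; one checks from the diagram that the link of $C_4$ is a graph that is \emph{not} homeomorphic to $S^1$ (it is two arcs sharing both endpoints, or similar — a theta-graph or a doubled interval), so $\|X\|\setminus e_\alpha$ is homotopy equivalent to something with nontrivial $H^1$ or $H^2$. I would then compute $H^*(X,Y_\alpha;F)$ via the long exact sequence of the pair together with $\tilde H^*(X;F)=0$, which gives $H^n(X,Y_\alpha;F)\cong \tilde H^{n-1}(Y_\alpha;F)$, and exhibit a nonzero class for some $n<3$.

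The main obstacle is the explicit identification of the homotopy type of $Y_\alpha$ (equivalently, of $\|X\|\setminus e_\alpha$) and the extraction of a nonzero reduced cohomology class in degree $\le 1$. I expect the cleanest route is combinatorial: work directly with the relative cellular cochain complex $C^*(X,Y_\alpha;F)$, whose generators are exactly the cells $e_\gamma$ with $e_\alpha\subset D_\gamma$ — namely $C_4=\alpha$ itself in degree $1$, the $2$-cells containing $\alpha$, and the $3$-cells $A_1,A_2$ in degree $3$ — read off the differentials $d(\gamma,\alpha)$ from the chosen orientations, and compute the cohomology by hand. The failure of $\alpha$ to have a spherical link will manifest as a rank mismatch making $H^2(X,Y_\alpha;F)\ne 0$ (or $H^1$, depending on bookkeeping). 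Once that single nonvanishing group is produced, Corollary \ref{topology3} immediately yields that $R(\hat\Gamma)$ is not Koszul, completing the proof. I would double-check the computation against Lemma \ref{topology2}(1), which guarantees that this phenomenon is impossible when $X$ is a manifold — so the example genuinely exploits the singularity of the gluing.
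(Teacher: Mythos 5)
Your proposal is correct and follows essentially the same route as the paper: both apply Corollary \ref{topology3} to the glued edge $\alpha=C_4$, use contractibility of $X$ for the cohomology statement, and derive non-Koszulity from $H^2(X,Y_\alpha;F)\ne 0$. The paper simply records that $Y_\alpha$ is homotopy equivalent to $S^1$ (it is two contractible pieces, one from each tetrahedron, meeting in two contractible components, namely $\beta$ and the far endpoint of $\alpha$); your heuristic description of the link of $\alpha$ as a theta-graph is off --- it is two disjoint arcs --- but this does not affect your proposed computation, which would indeed produce the required nonzero class in $H^2(X,Y_\alpha;F)$.
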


\begin{proof}
The cohomology statement is trivial, since $X$ is contractible. 
Consider the 1-cell $e_\alpha$ marked in the figure.  Then $Y_\alpha$ is homotopic to $S^1$.  Therefore 
$H^2(X,Y_\alpha;F) \ne 0$.  By \ref{topology3}, $R(\hat \Gamma)$ is not Koszul.
\end{proof}

\bibliographystyle{amsplain}
\bibliography{bibliog}

\end{document}